\newcommand{\Z}{\mathbb{Z}}
\newcommand{\Q}{\mathbb{Q}}
\newcommand{\R}{\mathbb{R}}
\newcommand{\C}{\mathbb{C}}
 \DeclareMathOperator{\psl}{PSL}
 \DeclareMathOperator{\slc}{SL}
 \DeclareMathOperator{\pgl}{PGL}
 \DeclareMathOperator{\gl}{GL}
 \DeclareMathOperator{\PU}{PU}
  \DeclareMathOperator{\SU}{SU}
   \DeclareMathOperator{\SO}{SO}
   \DeclareMathOperator{\PO}{PO}
 \DeclareMathOperator{\Gal}{Gal}
\theoremstyle{plain}
\newtheorem{theo}{Theorem}[section]
\newtheorem*{theo*}{Theorem}
\newtheorem{lemma}[theo]{Lemma}
\newtheorem{rk}[theo]{Remark}
\newtheorem{prop}[theo]{Proposition}
\title{}
\author{}
\date{\vspace{-3ex}}
\begin{document}
\title{Representations of Deligne-Mostow lattices into $\pgl(3,\C)$}

\author{E. Falbel, I. Pasquinelli and A. Ucan-Puc}

\maketitle

\abstract{We classify representations of a class of Deligne-Mostow lattices into $\pgl(3,\C)$. In particular, we show local rigidity for the representations (of Deligne-Mostow lattices with 3-fold symmetry and of type one) where the generators we chose are of the same type as the generators of Deligne-Mostow lattices. 
We also show local rigidity without constraints on the type of generators for six of them and we show the existence of local deformations for a number of representations in three of them.
We use formal computations in 
 SAGE and Maple to obtain the results.
The code files are available on GitHub (\cite{GH}). 
\section{Introduction}

Lattices in semi-simple Lie groups have strong rigidity properties (see \cite{WM} for a general introduction to lattices in semi-simple Lie groups and more details in Section \ref{sec:rigidity}). The Mostow-Prasad rigidity theorem states that, when $\rho : \Gamma_1\rightarrow \Gamma_2$ is an isomorphism between two lattices $\Gamma_1$, $\Gamma_2$ contained in  $G_1$ and $G_2$ (with $G_i$ being a connected simple Lie group with trivial center and non-isogenous to $\slc(2, \R)$), then $\rho$ extends to an isomorphism between the Lie groups $G_1$ and $G_2$.
In higher rank, Margulis' superrigidity theorem implies that any homomorphism $\phi: \Gamma\rightarrow G'$ of a lattice $\Gamma\subset G$ (with $G$, $G'$ simple, connected, with trivial centre, no compact factors and such that the rank of $G$ is $>1$) with Zariski dense image, extends to a homomorphism ${\hat{\phi}} : G\rightarrow G'$.


In rank one, it is well known that superrigidity fails.  In particular, there exist representations of a given lattice in $\PU(2,1)$ into other Lie groups which do not extend to homomorphisms of the whole group $\PU(2,1)$ (see Remark \ref{sec:MostowEx}).
In this paper we are interested in
representations (up to conjugation) of a class of Deligne-Mostow lattices into $\pgl(3,\C)$.   

An important motivation for our results is the study of complex projective structures. Indeed, given a complex projective structure on a complex surface $M$, one can define a developing map $D: \tilde{M}\rightarrow \C P^2$ and a holonomy representation 
$\rho : \pi_1(M)\rightarrow \pgl(3,\C)$. Complex hyperbolic manifolds carry an induced complete complex projective structure. It is an open question whether a complex hyperbolic manifold has a different complex projective structure than this one. On the other hand, fixing the complex structure of a finite volume complex hyperbolic surface, its complex projective structure is unique by a theorem of Mok-Yeung \cite{MY} (see also \cite{Klingler}).
This means that if a complex hyperbolic manifold carried a complex projective structure different from the one induced by the complex hyperbolic structure, it would have to come from a different complex structure.
It is possible that, for the non-cocompact lattices, some of the representations in our list correspond (taking torsion free subgroups) to complex projective structures on complex manifolds diffeomorphic but not biholomorphic to complex hyperbolic manifolds. One should be aware that in the compact case, Siu's rigidity theorem (valid for any compact hermitian locally symmetric space) implies that any other K\" ahler complex structure on the manifold is biholomorphic or conjugate-biholomorphic to the original complex hyperbolic structure.

A  more comprehensive overview of the panorama of known rigidity theorems that motivate our work and relate to it is given in Section \ref{sec:rigidity}.

In section \ref{results} we state the classification of all representations where the generators we chose are of the same type as the generators of the Deligne-Mostow lattice.

In Section \ref{section:DM} we review the definition of the Deligne-Mostow lattices with 3-fold symmetry in $\PU(2,1)$.
The main result which allows for our computations is the explicit presentation of the 3-fold Deligne-Mostow lattices. It was obtained in several steps culminating in \cite{Irene}, where a unified treatment of all 3-fold Deligne-Mostow lattices is given. We will use a presentation of each Deligne-Mostow group using two generators easily obtained from the presentations in \cite{Irene}.

In section \ref{sec:36} we give the detailed proofs for one case. 
The other cases are calculated in the same way and can be found in the SAGE and MAPLE companion notebooks to this paper. 
The main strategy is to use Gr\"obner basis methods to solve the equations 
imposed by the relators in the presentation. We use computations in SAGE and in MAPLE which are available on GitHub (\cite{GH}).
The case of representations not preserving generator types is treated in the Maple files. 
In these, we make use of the Rational Univariate Representation to get formal parameterizations of the solutions when the system we obtain is 0-dimensional (for an introduction to the method in the context of representations to $\pgl(3,\C)$ see \cite{FKR}). The computations show that the representations obtained for six of the lattices are all locally rigid (that is, 0-dimensional), while for three of them, there exists a 1-dimensional branch of representations.  We were not able to find explicit parametrizations of them but they are given by explicit Gr\"obner basis in the available MAPLE file (see also the comments at the end of Section \ref{results}).  
With the exception of the Deligne-Mostow lattice itself (because of the local rigidity theorem for complex hyperbolic lattices -- see Lemma \ref{lemma:complexification}), we do not know if this local rigidity phenomenon for type preserving representations is specific to the lattices we studied or has a more general scope. With an appropriate conjugation, the representations have values in an algebraic field extension of $\Q$ and we studied the orbits by the Galois group. 

We thank Sorin Dumitrescu for useful discussions and for pointing to us the local rigidity result in \cite{MY}.
The second author also acknowledges the support of the FSMP and of the EPSRC grant EP/T015926/1. The third author acknowledges the support of the COnsejo NAcional de Ciencia Y Tecnolog\'ia, project 769037.
We also thank the anonymous referees for useful comments and remarks.

\section{Rigidity of lattices and projective structures}\label{sec:rigidity}

In this section we give some background on the problem {of finding} representations of a lattice and several fundamental rigidity theorems which are known.

Representations of a  group $\Gamma$ into a Lie group $G$ have been studied for a long time. 
It is common to consider only representations modulo conjugation by the Lie group and refer to  the set of deformations modulo conjugation as the deformation variety.    
For instance, if $\Gamma$ is the fundamental group of a compact surface and $G=\slc(2,\R)$, then one component of the deformation variety is Teichm\"uller space, containing only discrete and faithful representations. 
On the other hand, assume that $G$ is a higher dimensional semi-simple group and that $\Gamma$ is a lattice in a Lie group (that is, a discrete subgroup with a finite volume quotient for the Haar measure). 
Then, building on partial results by Calabi, Selberg, Calabi-Vesentini, the following local rigidity result was obtained by A. Weil in the cocompact case and by Garland-Raghunatan in the non-cocompact case.

\begin{theo}\label{thm:Weil}
Let $\rho : \Gamma\rightarrow G$ be an isomorphism onto its image, which is a lattice in the semi-simple Lie group $G$. 
 If $G$ is not locally isomorphic to $\slc(2,\R)$ or  $\slc(2,\C)$  then the embedding is locally rigid.
In other words, all representations near the embedding are conjugate.
\end{theo}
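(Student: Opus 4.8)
The plan is to translate the statement into the deformation theory of representations and to reduce local rigidity to a cohomological vanishing theorem. Write $\mathfrak{g}$ for the Lie algebra of $G$, regarded as a $\Gamma$-module via the adjoint action $\mathrm{Ad}\circ\rho$. Since $\Gamma$ is finitely presented, the representation variety $\mathrm{Hom}(\Gamma,G)$ sits inside a product of finitely many copies of $G$ (one for each generator), cut out by the relators, and a standard computation identifies its Zariski tangent space at $\rho$ with the space of $1$-cocycles $Z^1(\Gamma,\mathfrak{g})$; the tangent space to the $G$-conjugation orbit of $\rho$ is precisely the space of coboundaries $B^1(\Gamma,\mathfrak{g})$. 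Hence the infinitesimal deformations of $\rho$ modulo conjugation are parametrized by the group cohomology $H^1(\Gamma,\mathfrak{g})$. The first move is therefore to prove the vanishing $H^1(\Gamma,\mathfrak{g})=0$ and then to invoke Weil's rigidity lemma, which promotes this infinitesimal statement to the genuine local one: when $H^1=0$ the conjugation orbit of $\rho$ fills a neighborhood of $\rho$ in $\mathrm{Hom}(\Gamma,G)$, so every nearby representation is conjugate to $\rho$. This last implication uses finite presentation in an essential way and is otherwise formal.

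The heart of the argument is the vanishing $H^1(\Gamma,\mathfrak{g})=0$, which I would establish through the geometry of the locally symmetric space. Let $X=G/K$ and $M=\Gamma\backslash X$ (a manifold after passing to a finite-index torsion-free subgroup, or an orbifold). By de Rham's theorem with local coefficients, $H^1(\Gamma,\mathfrak{g})\cong H^1(M;\mathcal{E})$, where $\mathcal{E}$ is the flat bundle on $M$ associated to $\mathrm{Ad}\circ\rho$. In the cocompact case, Hodge theory represents each class by a unique harmonic $\mathcal{E}$-valued $1$-form, and one applies the Bochner-type identity of Matsushima and Murakami, which writes the Hodge Laplacian as a connection Laplacian plus a curvature term assembled from the Riemannian curvature of $X$ and the Killing form of $\mathfrak{g}$. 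The decisive fact is that this curvature term is nonnegative on $1$-forms and strictly positive exactly when $G$ is not locally isomorphic to $\slc(2,\R)$ or $\slc(2,\C)$; in the two excluded cases, corresponding to the real hyperbolic spaces $\mathbb{H}^2_\R$ and $\mathbb{H}^3_\R$, the term degenerates and honest deformations (Teichm\"uller deformations, respectively bending or surgery deformations) appear. Under the positivity hypothesis, integrating the identity over the compact $M$ forces every harmonic $1$-form to vanish, yielding $H^1=0$.

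The main obstacle is the non-cocompact case, which is Garland–Raghunathan's contribution and where the real work lies. When $M$ has finite volume but is not compact, neither Hodge theory nor integration by parts is automatic: harmonic representatives need not be unique, the integral of the Bochner identity may diverge, and boundary terms at the cusps must be controlled. The plan is to work with $L^2$-cohomology and square-integrable harmonic forms, to use the precise structure of the ends of $M$ (their parabolic subgroups and horoball neighborhoods) to show that the relevant forms decay rapidly enough that the integration by parts holds with vanishing boundary contribution, and finally to identify $L^2$-cohomology with the ordinary $H^1(\Gamma,\mathfrak{g})$ governing deformations. The finite-volume hypothesis enters exactly in bounding the cusp contributions, and it is this analytic control near infinity, rather than the algebraic reduction or the curvature computation, that constitutes the genuine difficulty.
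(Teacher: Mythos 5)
The paper does not actually prove this theorem: it is quoted as a classical result, attributed to Weil in the cocompact case and to Garland--Raghunathan in the non-cocompact case, and the only argumentative content the paper supplies is the discussion immediately following the statement, namely Weil's criterion (for finitely generated $\Gamma$, infinitesimal rigidity $H^1(\Gamma,\mathfrak{g})=0$ implies local rigidity) together with the remark that under the theorem's hypotheses infinitesimal rigidity can in fact be proved. Your proposal follows exactly this route --- identify the Zariski tangent space of the representation variety with $Z^1(\Gamma,\mathfrak{g})$, the orbit directions with $B^1(\Gamma,\mathfrak{g})$, reduce to vanishing of $H^1$, and establish the vanishing by the Matsushima--Murakami Bochner argument in the cocompact case and by $L^2$-cohomology plus control at the cusps in the finite-volume case --- so it is the standard proof and consistent with what the paper gestures at; it is, of course, a plan rather than a complete argument, since the curvature computation and especially the analytic work at the cusps (the genuinely hard part, as you say) are named but not carried out. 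One factual correction: your claim that the curvature term is strictly positive \emph{exactly} when $G$ is not locally isomorphic to $\slc(2,\R)$ or $\slc(2,\C)$, with ``honest deformations (bending or surgery)'' appearing for cocompact lattices in $\slc(2,\C)$, is wrong. In the cocompact case only $\slc(2,\R)$ must be excluded: closed hyperbolic $3$-manifolds are locally rigid (Calabi--Weil), and the deformations you allude to exist only for cusped manifolds (Dehn surgery deformations) or for deformations into a strictly larger group (bending into $\pgl(n+2,\R)$, as the paper recalls in its discussion of $\PO(n,1)$). The exclusion of $\slc(2,\C)$ in the theorem is forced solely by the non-cocompact case. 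Since the stated theorem excludes both groups anyway, this slip does not invalidate your argument, but the ``exactly'' should be removed.
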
 

A slightly different but strongly related rigidity concept is that of infinitesimal rigidity.
The infinitesimal rigidity of a representation $\rho : \Gamma\rightarrow G$ is, by definition, the vanishing of the cohomology group $H^1(\Gamma, \mathfrak{g})$, where $\Gamma$ acts on the Lie algebra of $G$, $\mathfrak{g}$, by the adjoint representation
$Ad_{\rho} : G\to Aut (\mathfrak{g})$.  
The cohomology group is identified with the {Zariski} tangent space of the deformation variety. 
Then Weil's {criterion} states the following: if $\Gamma$ is finitely generated, then an infinitesimally rigid representation is also locally rigid.  
Note that here we are not assuming that $G$ is semi-simple nor that $\Gamma$ is a lattice.

The opposite implication is in general not true.
It could happen that $H^1(\Gamma, \mathfrak{g})\neq \{ 0\}$, but that the representation is still locally rigid.  
It occurs when the elements of $H^1(\Gamma, \mathfrak{g})\neq \{ 0\}$ are not tangent to a path in the deformation variety.  
But, under the assumptions of Theorem \ref{thm:Weil} above, one can, in fact, prove infinitesimal rigidity.  
In particular, for  a lattice $\Gamma\subset \PU(n,1)$, one has $H^1(\Gamma, \mathfrak{su}(n,1))=0$. 
This implies the following local rigidity property for the induced representation $ \Gamma\to \pgl(n,1)$ obtained by the inclusion $ {\PU}(n,1)\to \pgl(n,1)$ (see \cite{Klingler} prop. 3.2):

\begin{lemma} \label{lemma:complexification} Let $\Gamma\subset \PU(n,1)$ be a lattice and $\iota :\Gamma\to  \pgl(n+1)$ be the representation obtained by composition with the inclusion $ {\PU}(n,1)\to \pgl(n+1)$.  Then $\iota$ is locally rigid.
\end{lemma}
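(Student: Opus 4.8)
The plan is to deduce local rigidity of $\iota$ from \emph{infinitesimal} rigidity and then invoke Weil's criterion, which applies because a lattice $\Gamma$ is finitely generated. Thus it suffices to prove that $H^1(\Gamma,\mathfrak{g})=0$, where $\mathfrak{g}=\mathfrak{sl}(n+1,\C)$ is the Lie algebra of $\pgl(n+1,\C)$ (viewed as a real Lie group) and $\Gamma$ acts through $\mathrm{Ad}\circ\iota$.

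The key structural observation is that $\mathfrak{su}(n,1)$ is a real form of $\mathfrak{sl}(n+1,\C)$. Writing $\mathfrak{k}=\mathfrak{su}(n,1)$ for the Lie algebra of $\PU(n,1)$, the differential of the inclusion $\PU(n,1)\to\pgl(n+1,\C)$ realizes $\mathfrak{k}$ as a real subalgebra with $\mathfrak{g}=\mathfrak{k}\oplus i\,\mathfrak{k}$ as real vector spaces, where $i$ denotes multiplication by the complex scalar $i$ in $\mathfrak{sl}(n+1,\C)$. First I would check that this splitting is a decomposition of $\Gamma$-modules. Since the adjoint action of $\PU(n,1)$ on $\mathfrak{sl}(n+1,\C)$ is given by conjugation, it is $\C$-linear and hence commutes with multiplication by $i$; therefore $i\,\mathfrak{k}$ is a $\Gamma$-submodule and multiplication by $i$ is a $\Gamma$-equivariant isomorphism $\mathfrak{k}\xrightarrow{\sim} i\,\mathfrak{k}$. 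Both summands are thus isomorphic, as real $\Gamma$-modules, to the adjoint module $\mathfrak{su}(n,1)$.

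With the splitting in hand, additivity of group cohomology over direct sums of coefficient modules gives
\[
H^1(\Gamma,\mathfrak{sl}(n+1,\C))\;\cong\;H^1(\Gamma,\mathfrak{su}(n,1))\oplus H^1(\Gamma,i\,\mathfrak{su}(n,1))\;\cong\;H^1(\Gamma,\mathfrak{su}(n,1))^{\oplus 2}.
\]
By the vanishing $H^1(\Gamma,\mathfrak{su}(n,1))=0$ recorded above for lattices in $\PU(n,1)$, the right-hand side is zero. Hence $\iota$ is infinitesimally rigid, and Weil's criterion yields local rigidity.

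The only genuinely delicate point is the identification of $\mathrm{Ad}\circ\iota$ with the action on the two copies of the adjoint module, i.e.\ verifying that the adjoint representation of $\PU(n,1)$ on $\mathfrak{g}$ is precisely the complexification of its adjoint representation on $\mathfrak{su}(n,1)$. This is where one uses that $\PU(n,1)\to\pgl(n+1,\C)$ is a Lie group homomorphism whose derivative is the real-form embedding $\mathfrak{su}(n,1)\hookrightarrow\mathfrak{sl}(n+1,\C)$; everything else is formal. This argument is essentially that of \cite{Klingler}, Prop.~3.2.
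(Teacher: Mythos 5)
Your proposal is correct and follows essentially the same route as the paper: both reduce local rigidity to infinitesimal rigidity via Weil's criterion and then use the decomposition $\mathfrak{sl}(n+1,\C)=\mathfrak{su}(n,1)\oplus i\,\mathfrak{su}(n,1)$ together with the vanishing $H^1(\Gamma,\mathfrak{su}(n,1))=0$ to conclude. The only difference is that you spell out the $\Gamma$-equivariance of the splitting (that multiplication by $i$ commutes with the adjoint action), a point the paper's proof leaves implicit.
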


\begin{proof}
By Weil's {criterion} it suffices to prove that the representation is infinitesimaly rigid, that is $H^1(\Gamma, \mathfrak{sl}(n+1,\C))=0$. As 
$\mathfrak{sl}(n+1,\C)= \mathfrak{su}(n,1)\otimes\C=\mathfrak{su}(n,1)\oplus i\mathfrak{su}(n,1)$ one has that
$H^1(\Gamma, \mathfrak{sl}(n+1,\C))=H^1(\Gamma, \mathfrak{su}(n,1))\otimes \C=H^1(\Gamma, \mathfrak{su}(n,1))\oplus iH^1(\Gamma, \mathfrak{su}(n,1))=0$.
\end{proof}

This lemma, with the same proof, is also valid for other representations of lattices obtained through complexifications of Lie groups.  
Related to local rigidity of complex hyperbolic lattices one should mention that the complex structure of a quotient $\Gamma \backslash H^n_\C$ of complex hyperbolic space by a torsion free lattice is locally rigid \cite{CV}.  It is still an open problem to decide if this complex structure is unique.

A step further is the passage from local rigidity to global rigidity in the case of lattices.  
This is known as the Mostow theorem and is due to Mostow for the cocompact case and to Prasad for the non-cocompact case (see \cite{WM} pg. 309):

\begin{theo}
Let $\rho : \Gamma_1\rightarrow \Gamma_2$ be an isomorphism between two lattices in  $G_1$ and $G_2$ (with $G_i$ connected non-compact simple Lie group with trivial center non-isogenous to $\slc(2, \R)$).  
Then $\rho$ extends to an isomorphism between the Lie groups $G_1$ and $G_2$.

\end{theo}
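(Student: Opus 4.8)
The plan is to realise the purely algebraic isomorphism $\rho$ by geometry. Each $G_i$ acts on its associated symmetric space of non-compact type $X_i = G_i/K_i$, where $K_i$ is a maximal compact subgroup, and the lattice $\Gamma_i$ acts properly discontinuously with finite-volume quotient $M_i = \Gamma_i \backslash X_i$. First I would use $\rho$ to build a $\rho$-equivariant map $f : X_1 \to X_2$, for instance by choosing a fundamental domain and extending equivariantly. The central analytic input is that $f$ can be taken to be a quasi-isometry: in the cocompact case this follows from the Milnor--\v{S}varc lemma, since both $\Gamma_i$ and $X_i$ are then quasi-isometric to one another; in the non-cocompact case (the Prasad part) one must instead work with the thick--thin decomposition and control the geometry of the cusps, which is the extra technical difficulty distinguishing Prasad's theorem from Mostow's.

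Next I would pass to the boundary at infinity. A quasi-isometry between symmetric spaces of non-compact type extends to a homeomorphism of their visual boundaries $\partial_\infty X_1 \to \partial_\infty X_2$, compatible with $\rho$. The heart of the argument is to upgrade this boundary homeomorphism to a map induced by an actual isometry, and this is where the hypotheses do the work and where the main obstacle lies.

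In rank one --- i.e. when $X_i$ is real, complex or quaternionic hyperbolic space, or the Cayley plane --- the boundary is a sphere carrying a conformal (Carnot--Carath\'eodory) structure, and one shows the boundary map is conformal. The classical route (Mostow) combines ergodicity of the geodesic flow on $M_i$ with a regularity argument to force conformality almost everywhere; a conformal self-map of the boundary sphere then arises from an isometry of $X_i$ via a generalised Liouville theorem. It is precisely here that the exclusion of $\slc(2,\R)$ is essential: for $X = \mathbb{H}^2_\R$ the boundary is a circle, conformality is vacuous, and the flexibility is genuine (this is Teichm\"uller space). In higher rank I would instead invoke the Tits building structure on $\partial_\infty X_i$: the boundary quasi-isometry respects the incidence geometry, and by Tits' rigidity theorem every automorphism of an irreducible spherical building of rank $\geq 2$ is induced by an element of the corresponding algebraic group, again yielding an isometry.

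Finally, the isometry $g : X_1 \to X_2$ obtained this way conjugates the $\Gamma_1$-action to the $\Gamma_2$-action and hence induces $\rho$ on the groups; conjugation by $g$ then extends to an isomorphism $G_1 \to G_2$ of the full isometry groups, as required. I expect the genuinely hard step to be the regularity argument promoting a mere boundary homeomorphism to a conformal (rank one) or building-preserving (higher rank) map: the measure-theoretic ergodicity input in the rank-one case and the combinatorial Tits-building input in higher rank are the crux, while everything before and after is comparatively formal.
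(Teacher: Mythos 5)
The paper does not prove this statement at all: it is the classical Mostow--Prasad rigidity theorem, quoted verbatim from the literature (the paper cites \cite{WM}, p.~309, attributing the cocompact case to Mostow and the non-cocompact case to Prasad). So there is no internal proof to compare against; what you have written is a reconstruction of the standard proof from the literature, and at the level of a strategy outline it is the correct one: equivariant coarse map, extension to the visual boundary, promotion of the boundary map to a conformal map (rank one, via ergodicity of the geodesic flow and a regularity argument, with the circle $\partial H^2_\R$ correctly identified as the case that must be excluded) or to a building automorphism (higher rank, via Tits' theorem), and finally conjugation by the resulting isometry.

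Regarded as a proof rather than a roadmap, however, there is one genuine gap beyond the admitted black boxes. Your first step -- ``choose a fundamental domain, extend equivariantly, and invoke Milnor--\v{S}varc to get a quasi-isometry'' -- fails outright in the non-cocompact case, and not merely for technical reasons: a non-uniform lattice $\Gamma_i$ is \emph{never} quasi-isometric to its symmetric space $X_i$, because the orbit $\Gamma_i x_0$ is not coarsely dense (points deep in a cusp are arbitrarily far from the orbit). So in the Prasad case there is no equivariant quasi-isometry $X_1 \to X_2$ to extend to the boundary; one must instead compare the truncated (neutered) spaces obtained by deleting equivariant families of horoballs, prove a quasi-isometry statement there, and then do genuine work to control what happens across the horoballs before any boundary map exists. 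Your sentence about the thick--thin decomposition points at this, but as written the argument's very first object does not exist in the case that distinguishes Prasad's theorem from Mostow's. The remaining steps (quasi-conformality and the ergodicity-to-conformality upgrade in rank one, preservation of the Tits building and Tits' rigidity in higher rank) are correctly placed but are themselves deep theorems, so the proposal should be read as a correct map of the classical proof rather than a self-contained argument -- which is consistent with how the paper itself treats the statement, namely as a citation.
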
 

In particular, in the case of two lattices $\Gamma_1$ and $\Gamma_2$ in $\PU(n,1)$,  $\Gamma_2$ is conjugated to $\Gamma_1$ or to its complex conjugate by an element of $\PU(n,1)$.

The main goal of this paper is to classify representations of lattices in $\PU(2,1)$  into $\pgl(3,\C)$.
By Mostow-Prasad rigidity, they are rigid in $\PU(2,1)$.  
But, as it turns out, they are not rigid in $\pgl(3,\C)$.  This should be compared with the superrigidity theorem by Margulis.  
We state it in a simplified form (see \cite{WM} pg. 323): 
\begin{theo}  Assume $G_1$ and $G_2$ are connected non-compact simple Lie groups with trivial centre.  
Assume that the rank of $G_1$ is at least two.  
Let $\rho : \Gamma\rightarrow G_2$ be a homomorphism from a lattice $\Gamma\subset G_1$ such that $\rho(\Gamma)$ is Zariski dense in $G_2$.  
Then $\rho$ extends to a continuous homomorhism $\hat{\rho} : G_1\to G_2$.

\end{theo}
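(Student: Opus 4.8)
The plan is to follow Margulis' original argument, whose engine is the construction of an equivariant boundary map and the conversion of the higher-rank hypothesis into algebraicity. Fix a minimal parabolic subgroup $P_1\subset G_1$, so that the homogeneous space $B=G_1/P_1$ is the Furstenberg boundary of $G_1$; since $P_1$ is amenable while $G_1$ is not, $B$ is a $\Gamma$-boundary. The first step is to produce a measurable $\Gamma$-equivariant map into a space attached to $G_2$. Fixing a minimal parabolic $P_2\subset G_2$ and writing $M(G_2/P_2)$ for the space of probability measures on the flag variety $G_2/P_2$, amenability of $P_1$ together with a standard fixed-point and disintegration argument yields a measurable $\Gamma$-equivariant map $\psi : B\to M(G_2/P_2)$.

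The second step is to upgrade $\psi$ to a genuine boundary map $\phi : B\to G_2/Q$ into a flag variety, where $Q\supseteq P_2$ is a parabolic subgroup. Here the Zariski density of $\rho(\Gamma)$ is essential: by a lemma of Furstenberg on measures on projective $G_2$-varieties, a $\Gamma$-equivariant family of measures whose essential stabilisers are Zariski dense must, after passing to an appropriate partial flag variety, be Dirac-valued. Combined with the ergodicity of the $\Gamma$-action on $B$ and on $B\times B$ (Howe--Moore), this forces $\psi$ to be induced by a measurable $\Gamma$-equivariant point map $\phi : B\to G_2/Q$.

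The third and decisive step --- and the one I expect to be the main obstacle --- is to show that $\phi$ agrees almost everywhere with a \emph{rational} map. This is where the hypothesis $\operatorname{rank} G_1\geq 2$ enters in an indispensable way: one exploits the richer structure of the parabolic subgroups of $G_1$, in particular the existence of several independent horospherical (unipotent) directions whose joint dynamics pin down the transformation behaviour of $\phi$ under one-parameter subgroups and force it to respect the algebraic structure of $G_2/Q$. One analyses how $\phi$ intertwines the $\Gamma$-actions coming from distinct parabolics and shows, using ergodicity, that a measurable equivariant map cannot vary transversally to the algebraic structure, so that it coincides with a rational map. In rank one this mechanism is absent, which is precisely why superrigidity --- and hence the present theorem --- fails there; this is the contrast that motivates the rest of the paper.

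Finally, once $\phi$ is rational and $\Gamma$-equivariant, Zariski density allows one to promote the $\Gamma$-equivariance to $G_1$-equivariance for a homomorphism. Concretely, the relation $\phi(\gamma x)=\rho(\gamma)\phi(x)$ defines, through the $G_1$-action on $B$, an algebraic condition satisfied on the Zariski-dense set $\rho(\Gamma)$; realising the candidate extension via the graph of a homomorphism and invoking density forces this graph to be an algebraic subgroup, producing a homomorphism $\hat{\rho} : G_1\to G_2$ with $\hat{\rho}|_\Gamma=\rho$. Its continuity follows from its algebraicity, which completes the extension.
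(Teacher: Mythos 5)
The first thing to note is that the paper does not prove this statement at all: it is Margulis' superrigidity theorem, quoted in simplified form with a pointer to the literature (\cite{WM}, pg.\ 323), so there is no internal proof to compare yours against. Your outline does reproduce the architecture of the canonical proof that this citation points to: the amenability argument producing a measurable $\Gamma$-equivariant map from the Furstenberg boundary $B=G_1/P_1$ into probability measures on $G_2/P_2$; the upgrade, using Zariski density and ergodicity on $B\times B$, to a point map into a partial flag variety $G_2/Q$; the rationality of that map, which is where the higher-rank hypothesis enters; and the final promotion of $\Gamma$-equivariance to a homomorphism $\hat\rho:G_1\to G_2$ via Zariski density and a graph argument. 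As a roadmap, this is faithful to Margulis' proof, and you have correctly identified where rank at least two is indispensable and why the theorem fails in rank one.

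As a proof, however, there is a genuine gap, and you have located it yourself: the third step, rationality of the measurable boundary map, \emph{is} the theorem. Your treatment of it is purely descriptive (``joint dynamics pin down the transformation behaviour'', ``cannot vary transversally to the algebraic structure''); no argument is actually given, and this step occupies the bulk of the real proofs in Margulis', Zimmer's, or Witte-Morris' accounts, requiring a careful analysis of how the map transforms under opposite horospherical subgroups and a nontrivial measurable-implies-rational statement. The second step is also compressed beyond repair: the assertion you attribute to ``a lemma of Furstenberg'' (Zariski dense essential stabilisers force Dirac measures) is not that lemma, which says instead that a probability measure on a projective variety with non-compact stabiliser is supported on a finite union of proper subvarieties; turning that into a point map requires an induction through smaller flag varieties combined with the ergodicity you cite. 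So what you have written is an accurate table of contents for the proof the paper implicitly invokes, not a proof; given that the paper itself treats the result as a citation, the honest conclusion is that both the paper and your text rest on the same external reference, and your sketch adds a correct but non-self-contained summary of it.
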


The theorem is also valid if $G_1$ is a Lie group of rank one other than those isogeneous to $\SU(n,1)$ or $\SO(n,1)$ (see \cite{Cor}).  The techniques used in the rank one case are inspired by Siu's theorem  \cite{Siu} which states that any K\" ahler manifold homotopic to a compact complex hyperbolic manifold is either biholomorphic or conjugate biholomorphic to it.

\begin{rk}\label{sec:MostowEx}

Mostow (see \cite{mostow} section 22) constructed  two cocompact lattices (which are arithmetic) $\Gamma_1$ and $\Gamma_2$ of $\PU(2,1)$ and a surjective homomorphism
$\rho : \Gamma_1\to \Gamma_2$ with infinite kernel.  This gives an example of a violation of superrigidity for lattices in 
$\PU(2,1)$.
We are not able to exactly relate this homomorphism with our representations, since our group {is} the same as Mostow's group only up to finite index.
\end{rk}

\subsection{Real and complex  projective structures.}

In this paper we construct a large class of homomorphisms of Deligne-Mostow lattices in $\PU(2,1)$ into $\pgl(3,\C)$.  
One might think that these representations (taking torsion free subgroups of the lattice) might be holonomy representations of different complex projective structures on the quotient of complex hyperbolic space by the lattice. 
But if the complex structure of the quotient is fixed, that cannot be the case:

\begin{theo}[Mok-Yeung \cite{MY}]

Let $\Gamma\backslash H^2_\C$ be a complex hyperbolic manifold of finite volume. Then its complex projective structure induced by the embedding $H^2_\C\subset \C P^2$ is the unique projective structure compatible with its complex structure.
\end{theo}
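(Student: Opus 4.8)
The plan is to fix the complex structure of $M=\Gamma\backslash H^2_\C$, take the standard projective structure $P_0$ coming from the inclusion $H^2_\C\subset\C P^2$ (with holonomy the inclusion $\PU(2,1)\hookrightarrow\pgl(3,\C)$), and show that any other compatible flat holomorphic projective structure $P$ coincides with $P_0$. Both $P$ and $P_0$ are represented by holomorphic normal projective connections, and their difference is encoded by a globally defined holomorphic tensor field $\phi$ on $M$ — a section of a vector bundle $\mathcal{E}$ associated to the holomorphic cotangent bundle $\Omega^1_M$, playing the role of the relative projective Schwarzian of $P$ with respect to $P_0$. Uniqueness of the projective structure is then equivalent to proving $\phi\equiv 0$. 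Crucially, $\phi$ is not an arbitrary holomorphic section: since both connections are flat and normal, $\phi$ satisfies a natural first-order integrability equation, and it is this equation, together with the geometry of the complex hyperbolic metric, that I would exploit.

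First I would treat the cocompact case through a Bochner/Weitzenböck argument. Endow $M$ with its complex hyperbolic Kähler–Einstein metric, of negative Ricci and negative holomorphic sectional curvature, inducing a Hermitian metric on $\mathcal{E}$. Writing the integrability equation satisfied by $\phi$ as the vanishing of a natural first-order operator and pairing the associated Weitzenböck identity against $\phi$, I would integrate over the compact $M$: the curvature terms coming from the negatively curved metric have a definite sign, while the derivative terms integrate to zero by Stokes, forcing $\phi$ to be parallel and then, by the strict curvature negativity, to vanish. This is the step where complex dimension $\geq 2$ is essential — on a Riemann surface the analogous bundle is $S^2\Omega^1_M$ with $H^0\neq 0$ and no integrability constraint, reflecting the genuine moduli of projective structures on curves, so the argument must, and does, use the higher-dimensional projective curvature.

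The hard part will be the finite-volume, non-cocompact case, where $M$ carries cusps and Stokes' theorem acquires boundary contributions. The same Weitzenböck identity holds pointwise, so the task is to run the integration on an exhaustion of $M$ by compact sublevel sets of a distance function and to show that the boundary terms vanish in the limit. For this I would analyse the asymptotics of the complex hyperbolic metric and of the induced metric on $\mathcal{E}$ near each cusp — e.g. through the toroidal compactification, where the metric degenerates with controlled logarithmic growth — in order to bound the growth of the holomorphic tensor $\phi$ and to place it in the correct $L^2$ space. One then invokes finite volume together with an $L^2$-Stokes/cut-off argument (or a Liouville-type maximum principle for the resulting plurisubharmonic quantity) to kill the boundary terms and conclude $\phi\equiv 0$. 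This analytic control at the cusps is the genuine obstacle and is exactly the estimate carried out by Mok–Yeung; the reformulation and the compact Weitzenböck step are comparatively formal.

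Finally, $\phi\equiv 0$ gives $P=P_0$, the asserted uniqueness. I would emphasise how this strengthens Lemma \ref{lemma:complexification}: the vanishing $H^1(\Gamma,\mathfrak{sl}(3,\C))=0$ recorded there yields, via the Ehresmann–Thurston principle, only local rigidity of $P_0$ as a flat projective structure (no nearby inequivalent ones, even allowing the complex structure to move), whereas the present theorem is the global statement that, once the complex structure is fixed, $P_0$ is the unique compatible projective structure. The latter does not follow from vanishing of $H^1$ and is precisely the content of the Bochner-type vanishing of the zeroth-order datum $\phi$.
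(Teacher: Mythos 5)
You should first be aware that the paper contains no proof of this statement at all: it is an external theorem, quoted from Mok--Yeung \cite{MY} purely as motivation (it is what rules out the possibility that the representations constructed later are holonomies of new projective structures on the same complex manifold, forcing any such structure to come from a different complex structure). So there is no internal argument to compare yours against; the only meaningful comparison is with the strategy of \cite{MY} itself, which your outline does echo in broad strokes: encode the difference of two compatible projective structures as a global holomorphic tensor subject to an integrability constraint, kill it by a vanishing theorem driven by the negative curvature of the K\"ahler--Einstein metric in the cocompact case, and handle finite volume by exhaustion and control of boundary terms at the cusps.

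As a proof, however, your proposal has genuine gaps exactly where all the content lies. You never identify the bundle $\mathcal{E}$ or the ``natural first-order integrability equation'' concretely; you do not write down the Weitzenb\"ock identity for that operator, nor verify that its curvature term has a strict sign on the relevant bundle over complex hyperbolic space (this sign analysis is the heart of any Bochner argument and cannot be waved at); and for the non-cocompact case you explicitly delegate the decisive growth estimates near the cusps to ``exactly the estimate carried out by Mok--Yeung.'' An argument that invokes the theorem being proved for its hardest step is a reading plan, not a proof. A self-contained treatment would need, at minimum: (i) the precise identification of the space of projective structures compatible with a fixed complex structure as an affine space modelled on holomorphic sections of an explicit bundle, together with the explicit constraint singling out flat normal structures; (ii) the Weitzenb\"ock formula and sign computation for that constraint operator on $\Gamma\backslash H^2_\C$; and (iii) asymptotics of the K\"ahler--Einstein metric and of holomorphic tensors on a compactification sharp enough to run Stokes without boundary contributions. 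None of these steps is routine, and your text supplies none of them; it only names them. Your closing remark contrasting this global uniqueness with the local rigidity of Lemma \ref{lemma:complexification} is correct and worth keeping, but it is commentary, not proof.
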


On the other hand, the existence of different complex structures on the quotient $\Gamma\backslash H^2_\C$ (for a torsion free lattice $\Gamma$) is an open problem.
The theorem is also true for complex hyperbolic manifolds of dimension greater than two (see \cite{MY}) and for higher rank hermitian symmetric domains (see \cite{Klingler}). 

It is interesting to compare the  analogous situation arising in the case of lattices in $\PO(n,1)$.  
One can consider the embedding $\PO(n,1)\subset \pgl(n+1,\R)$ and look for deformations.  
The argument in Lemma \ref{lemma:complexification} does not apply anymore and indeed in many situations the lattice is not locally rigid in $\pgl(n+1,\R)$ (see \cite{JM}).  
In particular, the existence of totally geodesic hypersurfaces in a quotient of real hyperbolic space, $\Gamma \backslash H^n_\R$ implies the existence of deformations, called bendings, of the group
$\Gamma$ inside $\pgl(n+1,\R)$.  The space of deformations of representations of $\Gamma$ into $\pgl(n+1,\R)$ modulo conjugation has been studied for a long time with contributions by several authors (see the survey \cite{Q} which {describes}, among others, contributions by Kac-Vinberg, Koszul and Benoist).  
In particular, starting from a cocompact discrete subgroup of $\PO(n,1)$, there exists a whole component  of the space of deformations  consisting  of discrete faithful representations $\rho$ of $\Gamma$ such {that} there exists a convex open subset in projective space $\Omega\subset \R P^n$ whose quotient by $\rho(\Gamma)$ is compact.  These deformations correspond to holonomies of real projective structures on the underlying manifold structure of the hyperbolic structure.

\subsection{Other deformations.}

Another problem related to the one we study is to classify embeddings of lattices
 in ${\PU}(n,1)$ into ${\PU}(n+1,1)$ or the analogous problem of embeddings of lattices of
 ${\PO}(n,1)$ into ${\PO}(n+1,1)$.  For the latter, bending deformations were  also studied in \cite{JM}.  The former
  has been studied in \cite{GM} and \cite{T}.  In particular, a discrete, torsion free, cocompact subgroup $\Gamma\subset \PU(n,1)$ embedded in ${\PU}(n+1,1)$ preserving a totally geodesic complex hypersurface in complex hyperbolic space  has some rigidity in the sense that a connected component of the representation space into ${\PU}(n+1,1)$
  consists also of discrete representations preserving a totally geodesic complex hypersurface.  For a review and  generalizations of this phenomenon we refer to \cite{KM}.  See also \cite{P} for a recent overview of maximal representations of lattices in $\PU(1,1)$ into $\PU(2,1)$.

\section{The results}\label{results}

We refer to section \ref{section:DM} for the definition of Deligne-Mostow lattices. For the statement of the results we simply recall that 3-fold Deligne-Mostow lattices of type one are parametrised by a pair of integers $(p,k)$ and are generated by two elements $J$ and $R_1$ which are, respectively, a regular elliptic element of order 3 and a complex reflection of order $p$.
We say that a representation of the  lattice preserves generators types if the image of a complex reflection generator (respectively regular elliptic) is a complex reflection (respectively regular elliptic).  Note that in this definition the image of a complex reflection could be the identity and {its order} could be a a factor of the order of the Deligne-Mostow generator.  That is, conjugacy classes of reflections might not  be preserved.

\begin{theo} Classification of representations of type one Deligne-Mostow lattices (with  3-fold symmetry)  into $\pgl(3,\C)$:
\begin{enumerate}
\item Representations preserving generator types are locally rigid and are classified in Tables \ref{Tb:CompactDeligneMostow} and \ref{Tb:NonCompactDeligneMostow}.
\item
Representations without constraints on generators are also locally rigid, except for lattices (4,3), (4,4) and  (6,2) which have one dimensional branches for certain representations with regular elliptic generators.  They are enumerated in the MAPLE file in \cite{GH}.
\end{enumerate}
\end{theo}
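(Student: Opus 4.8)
The proof is computational and proceeds case by case over the finite list of type one Deligne–Mostow lattices with 3-fold symmetry, using the explicit two-generator presentations recalled in Section~\ref{section:DM}. Since each such lattice $\Gamma$ is generated by $J$ and $R_1$ subject to finitely many relators, a representation $\rho:\Gamma\to\pgl(3,\C)$ is completely determined by the pair $(A,B)=(\rho(J),\rho(R_1))$, and the set of all such representations modulo conjugation is the quotient by $\pgl(3,\C)$ of the affine variety cut out, in the space of pairs of matrices, by the relator equations: for each relator word $w$ of the presentation, the matrix $w(A,B)$ must be scalar. The plan is to realize this variety explicitly for every lattice, compute the dimensions of its components, and read off local rigidity (isolated points, i.e.\ $0$-dimensional components) or the presence of deformations (positive-dimensional components).

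First I would set up coordinates and cut down the ambient freedom. Lifting to $\gl(3,\C)$, I write $A$ and $B$ with indeterminate entries and use the $8$-dimensional conjugation action of $\pgl(3,\C)$: since $J$ is regular elliptic, its image $A$ may be put in diagonal form, which consumes $6$ of the $8$ conjugation dimensions and leaves a residual $2$-dimensional torus available to normalize $B$. In the type-preserving case (part 1) I additionally impose that $A$ have the characteristic polynomial of a regular elliptic element of order $3$ and that $B$ have that of a complex reflection of order dividing $p$; these are polynomial constraints on the eigenvalues. In the unconstrained case (part 2) the eigenvalue constraints are dropped and only the relators are imposed. In either case this produces, for each lattice, a polynomial ideal $I$ in the normalized entries with coefficients in $\Q$, or in a cyclotomic field once the roots of unity attached to $(p,k)$ are adjoined.

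Next I would compute a Gröbner basis of $I$ in SAGE and Maple and extract the dimension of its components. When the ideal is $0$-dimensional I would apply the Rational Univariate Representation to obtain a formal parametrization of the finitely many solutions, which both certifies local rigidity and produces the explicit entries recorded in Tables~\ref{Tb:CompactDeligneMostow} and~\ref{Tb:NonCompactDeligneMostow}. Running this over the whole list shows that the ideals are $0$-dimensional in every case except $(4,3)$, $(4,4)$ and $(6,2)$, where the Gröbner basis exhibits a $1$-dimensional component supported on representations with regular elliptic generators; these are the branches enumerated in the MAPLE file of \cite{GH}. Since the solutions lie in an algebraic extension of $\Q$, I would finally study the Galois action on the solution set to group conjugate representations together and confirm the table entries. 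The tautological embedding is handled separately by Lemma~\ref{lemma:complexification}, which guarantees its local rigidity and so is consistent with the $0$-dimensionality found computationally.

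The main obstacle is computational rather than conceptual: the Gröbner basis computations are very large, and their size depends sharply on the choice of normalization and monomial order, so the central practical difficulty is to normalize $(A,B)$ well enough that the relator ideal becomes tractable while losing no solutions. A secondary subtlety is certifying the dimension claims: one must verify that the exceptional $1$-dimensional branches for $(4,3)$, $(4,4)$ and $(6,2)$ are genuine components and not artifacts of elimination, and conversely that no positive-dimensional component is overlooked among the remaining lattices. Both points are addressed by carrying out one representative case in complete detail in Section~\ref{sec:36} and checking that the identical procedure, implemented in the companion SAGE and MAPLE notebooks of \cite{GH}, yields the stated answer for every lattice in the list.
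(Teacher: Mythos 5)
Your proposal follows essentially the same route as the paper: work case by case through the finite list of lattices with the two-generator presentation, normalize the pair $(\rho(J),\rho(R_1))$ by the $\pgl(3,\C)$-conjugation action, impose the relators (plus, for part 1, the type constraints) to obtain a polynomial ideal, and use Gr\"obner bases and the Rational Univariate Representation in SAGE/Maple to certify $0$-dimensionality (local rigidity) or exhibit the one-dimensional branches for $(4,3)$, $(4,4)$ and $(6,2)$, with the Galois action studied afterwards and Lemma \ref{lemma:complexification} covering the tautological representation. The only cosmetic difference is the choice of normal form: the paper fixes $\rho(J)$ to a specific non-diagonal matrix and pins down the fixed line of $\rho(R_1)$ using transitivity on (three points, a line), splitting into degenerate and non-degenerate configurations by generator type, rather than diagonalizing $\rho(J)$ and normalizing with the residual torus, but this is the same method.
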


In Table \ref{Tb:CompactDeligneMostow} are the details for each compact 3-fold type one Deligne-Mostow lattice. The representations are defined with coefficients in a cyclotomic extension of $\Q$, which depends on $(p,k)$. We also compute the orbits of the Galois group action on the space of representations. For the compact Deligne-Mostow lattices, there exists an invariant Hermitian form which is always non-degenerate.

\begin{table}[h!]
\centering
\begin{tabular}{|c|c|p{1.5cm}|c|c|c|}
\hline
$(p,k)$ & Total & Galois Orbits  & $\Q-$extension & Irreducible &Reducible\\ \hline

\multirow{2}{2em}{(3,4)} &\multirow{2}{1em}{18}& \multirow{2}{4em}{2} &{$\mathbf{\Q(\zeta_{36})}$} & 1 & 0 \\ \cline{4-6}
& & & {$\Q(\zeta_{18})$}& 1 &0 \\ \hline

(3,5) & 24 & 1 &{$\mathbf{\Q(\zeta_{45})}$} & 1 & 0 \\ \hline

\multirow{2}{2em}{(4,3)} & \multirow{2}{1em}{20} &\multirow{2}{2em}{5} & ${\mathbf{\Q(\zeta_{36})}}$& 3 & 0  \\ \cline{4-6}
& &  & {$\mathbf{\Q}$}& 0&(0,2)\\ \hline

\multirow{2}{2em}{(5,2)} & \multirow{2}{1em}{12} & \multirow{2}{1em}{2} & $\mathbf{\Q(\zeta_{15})}$ & 1 & 0\\ \cline{4-6}
& & & $\Q(\zeta_{10})$ &  1 & 0 \\\hline

\multirow{2}{2em}{(5,3)} & \multirow{2}{1em}{25} & \multirow{2}{1em}{2} & $\mathbf{\Q(\zeta_{45})}$& 1 & 0 \\ \cline{4-6}
& &  & $\Q$ & 0 &(0,1)  \\ \hline
\end{tabular}
\caption{Compact 3-fold Deligne-Mostow Lattices}
\label{Tb:CompactDeligneMostow}
\end{table}

In Table \ref{Tb:NonCompactDeligneMostow} are the details on non-compact 3-fold type one lattices.
Each pair $(p,k)$ defines a $\Q$-extension and each representation of the $(p,k)$ lattice has coefficient in (a subfield of) this $\Q-$extension.
For each representation there exists an invariant Hermitian form that can be non-degenerate with signatures (2,1) or (3,0), or degenerate. While the irreducible representations can only come from non-degenerate configurations, the reducible ones can come from either degenerate or non-degenerate configurations (see Section \ref{sec:36}). In the Reducible column the numbers $(m,n)$ mean that $m$ reducible representations come from non-degenerate configurations and $n$ come from degenerate ones. The last column corresponds to the number of representations which contain in their kernel an element of the centraliser of the cusp group. One may interpret those representations as representations which factor through the orbifold fundamental group of a compactification of the complex hyperbolic orbifold defined by the Deligne-Mostow lattice.

\begin{table}[h!!]
\centering
\begin{tabular}{|p{1cm}|p{1cm}|p{1.5cm}|c|c|c|p{2cm}|c|}
\hline
$(p,k)$ & Total & Galois Orbits &$\Q-$extension & Irreducible &Reducible & Hermitian Form & Factors\\ \hline
\multirow{4}{4em}{(3,6)}  & \multirow{4}{4em}{39}& \multirow{4}{4em}{14}&\multirow{2}{2em}{$\mathbf{\Q(\zeta_9)}$}& 1 & 0 &(2,1) & 0 \\ \cline{5-8}
& & & & 2& 0& (3,0)& 2\\ \cline{4-8}
 & & &$\Q(\zeta_3)$& 0& (6,4) & Degenerate &  10 \\ \cline{4-8}
 & & &$\Q$& 0& (0,1) & Degenerate &1 \\ \hline

\multirow{8}{4em}{(4,4)} & \multirow{8}{4em}{26}&\multirow{8}{4em}{13} &\multirow{3}{2.5em}{$\Q(\zeta_{12})$}& 1 & 0 &(2,1)& 0 \\ \cline{5-8}
& & & & 1 & 0 & (3,0) & 1 \\ \cline{5-8}
& & & & 0 & (2,2) &  Degenerate & 4 \\ \cline{4-8}
& & & $\Q(\zeta_6)$ & 2 & 0 & (3,0) &2 \\ \cline{4-8}
& & & \multirow{2}{2.5em}{$\mathbf{\Q(\zeta_4)}$} & 1 & 0 & (2,1) & 0 \\ \cline{5-8}
& & & & 0 & (2,0) & Degenerate & 2\\ \cline{4-8}
& & & $\Q$ & 1 & (0,1) & Degenerate & 2 \\ \hline 

\multirow{5}{4em}{(6,2)} & \multirow{5}{4em}{13}&\multirow{5}{4em}{5} & \multirow{2}{2.5em}{$\mathbf{\Q(\zeta_{18})}$} & 1 & 0 & (2,1) & 0 \\ \cline{5-8}
& & & & 1 & 0 & (3,0) & 1\\ \cline{4-8}
& & & $\Q(\zeta_6)$ & 1 & 0 & (3,0) & 1 \\ \cline{4-8}
& & & \multirow{2}{1.5em}{$\Q$} & 1 & 0 & (3,0) & 1\\ \cline{5-8}
& & & & 0 & (0,1) & Degenerate & 1 \\\hline

\multirow{4}{4em}{(6,3)} & \multirow{4}{4em}{44}& \multirow{4}{4em}{19}&\multirow{2}{2.5em}{$\mathbf{\Q(\zeta_9)}$} & 1 & 0 & (2,1)& 0 \\ \cline{5-8}
& & & & 2 & 0 & (3,0) & 2 \\ \cline{4-8}
& & & $\Q(\zeta_3)$ & 0 & (6,8)& Degenerate & 12\\ \cline{4-8}
& & & $\Q$ & 0 &(0,2) & Degenerate & 2 \\ \hline

\end{tabular}
\caption{Non-compact 3-fold Deligne-Mostow Lattices}
\label{Tb:NonCompactDeligneMostow}
\end{table}


Concerning the representations which do not preserve types of generators, 
the computations show that they  are all locally  rigid except for one dimensional branches for Deligne-Mostow lattices $(4,3), (4,4)$ and $(6,2)$.
We did not obtain, though, a parametrisation of the few examples having a positive dimension.
{In fact, we obtain the Hilbert dimension of the system of equations describing representations for each fixed order of the eigenvalues of $R_1$ (note that,  for each Deligne-Mostow group of of type $(p,k)$ the type preserving representations, $R_1$ has eigenvalues 1 and order $p$).  In a few cases, described in detail in the MAPLE file, the Hilbert dimension is one. This implies that there exists a one dimensional branch without the need of computing a parametrization.  We do not have a general understanting of this phenomenon.}

We also determine which representations may be lifted to $\gl(3,\C)$ representations (see \cite{GH}).  The representations with generators $\rho(J)$ and $\rho(R_1)$ of other types are not tabulated.

In the following sections, we give details for the calculations in the case of the Deligne-Mostow lattice $(3,6)$. 
We also compute the reducible representations of this lattice when the generators are \emph{not} of the same type as the ones of the Deligne-Mostow lattice (note that in this case, all representations are locally rigid). 
The same calculations for the other Deligne-Mostow lattices in the table can be found on GitHub (\cite{GH}).  

\section{Deligne-Mostow lattices}\label{section:DM}
The Deligne-Mostow lattices were introduced and studied by Mostow and by Deligne and Mostow in various works, including \cite{mostow} and \cite{delignemostow}. 
They have a long history dating back to by Picard, Le Vavasseur and Lauricella. 
Deligne and Mostow start with a \emph{ball N-tuple} $\mu=(\mu_1, \dots, \mu_N)$ which is a set of $N$ real numbers in $(0,1)$ whose sum is 2.
Then they look at a \emph{hypergeometric function} defined using these parameters. 
Finally, they use the \emph{monodromy action} to build some lattices in $PU(N-3,1)$. 
This list of lattices contains the first known examples {of} non-arithmetic complex hyperbolic lattices in dimensions 2 and 3. 

Here we will concentrate on a special class of the Deligne-Mostow lattices in $\PU(2,1)$, called the lattices with 3-fold symmetry (or the 3-fold Deligne-Mostow lattices) and of type one. 
More precisely, first we restrict to the case of dimension 2. 
This means that we are looking at the group $\PU(2,1)$ acting on 2-dimensional complex hyperbolic space and hence that we are looking at ball 5-tuples. 
In this dimension, the finite list of lattices that Deligne and Mostow found contains only lattices with some special symmetry. 
These are called the 2-fold and 3-fold symmetry lattices. 
A lattice has $m$-fold symmetry if $m$ of the $N$ elements of the ball $N$-tuple are equal. 

In this work we will only look at some of the lattices with 3-fold symmetry. 
For all of those, one can find a construction for a fundamental domain and a presentation in \cite{Irene}.
In principle the 2-dimensional Deligne-Mostow lattices depend on the 5 elements of the ball 5-tuple. 
The condition on the sum of the elements of the ball 5-tuple reduces the parameters to 4. 
Of these 4, 3 are equal, so the lattices only depend on 2 parameters. 
One common choice for the parameters is to choose the orders of some of the generators. 
We will hence identify the lattices using a pair $(p,k)$ which corresponds to the ball 5-tuple $(1/2-1/p, 1/2-1/p, 1/2-1/p, 1/2+1/p-1/k, 2/p+1/k)$. 

The Deligne-Mostow lattices in $\PU(2,1)$ with 3-fold symmetry are divided in 4 \emph{types} according to the ranges of $p$ and $k$. 
The type determines the presentation, the volume formula and the combinatorics of a fundamental domain. 
Here we will look at the lattices \emph{of type one}, for which an explicit fundamental domain and presentation can be found, for example, in \cite{boadiparker}.
We are choosing these because they have minimal complexity (minimum number of facets in a fundamental domain, shortest presentation). 
Note that they are all arithmetic.
In terms of $p$ and $k$ they are characterised by 
\begin{align*}
0<p &\leq 6, & k \leq \frac{2p}{p-2}.
\end{align*}
Then a presentation for them is given by 
\begin{equation}\label{eq:Pres}
\Gamma=\left\langle J,P,R_1,R_2 \colon
\begin{array}{l l}
J^3=R_1^p=R_2^p=(P^{-1}J)^k=I, \\
R_2=P R_1P^{-1}=JR_1J^{-1}, \ P=R_1R_2
\end{array} \right\rangle,
\end{equation}
We can rewrite this presentation in terms of the two generators $J$ and $R_1$ in the following way: 
\begin{equation}\label{eq:Pres2gen}
\Gamma=\left\langle J,R_1 \colon
\begin{array}{l l}
J^3=R_1^p=(R_1J)^{2k}=R_1JR_1J^2R_1JR_1^{p-1}J^2R_1^{p-1}JR_1^{p-1}J^2=Id \\
\end{array} \right\rangle,
\end{equation}
One can go back to the previous presentation writing $R_2=JR_1 J^{-1}$ and $P=R_1R_2$.

For the Deligne-Mostow lattices, the generator $J$ is always regular elliptic of order 3, while the generator $R_1$ is a complex reflection of order $p$. 
Recall that by the classification of isometries in complex hyperbolic space (see, for example, \cite{CG}), elliptic elements have at least one fixed point inside $H^2_\C$, parabolic elements have a unique fixed point on the boundary and loxodromic elements have exactly two fixed points on the boundary. 
Elliptic elements have finite order and have all eigenvalues of norm 1. 
They can be of two types: regular elliptic when they have three distinct eigenvalues, or complex reflections when they have a repeated eigenvalue.

The same analysis can be carried out for the Deligne-Mostow lattices of the other three types. 
This can be found in the note \cite{PARTII}.


%
\section{ The group (3,6)}\label{sec:36}

In this section we look in detail at the case $(p,k)=(3,6)$.
%
The presentation for this case is: 
\begin{equation}\label{eq:RPres(3,6)}
\Gamma=\left\langle J,R_1 \colon
\begin{array}{l l}
J^3=R_1^3=(R_1J)^{12}=R_1JR_1J^2R_1JR_1^2J^2R_1^2JR_1^2J^2=Id \\
\end{array} \right\rangle,
\end{equation}

We first note that for the Deligne-Mostow lattices, one has that $J$ is a regular elliptic element, while $R_1$ is a complex reflection. 
In general, while the presentation guarantees that $\rho(J)$ and $\rho(R_1)$ are both elliptic, we cannot say more about their type. 
We will hence look at all possible combinations of types: 
\begin{itemize}
\item when $\rho(J)$ is regular elliptic and $\rho(R_1)$ is a complex reflection, 
\item when $\rho(J)$ is a complex reflection and $\rho(R_1)$ is regular elliptic,
\item when $\rho(J)$ and $\rho(R_1)$ are both regular elliptic, 
\item when $\rho(J)$ and $\rho(R_1)$ are both complex reflections. 
\end{itemize}

Note that if $\rho(J)$ and $\rho(R_1)$ are both complex reflections, then the group preserves the intersection of two complex lines in $\C P^2$. The group is therefore reducible and we will hence ignore this last case.


\subsubsection{$\rho(J)$ is regular elliptic and $\rho(R_1)$ is a complex reflection}

First, let us assume that the fixed points of $\rho(J)$ do not intersect the fixed points of $\rho(R_1)$. 
We will call these the \emph{non-degenerate configurations}.
We remark that this case might give rise to reducible representations when there exists a degenerate Hermitian form preserved by the generators with non-trivial kernel.
We note in the following $\omega= -\frac{1}{2}+\frac{\sqrt{3}}{2}i$ and $\zeta_9$ a primitive 9th-root of unity.

\begin{prop}\label{prop:Parameters36}
A representation $\rho: \Gamma \rightarrow \pgl(3,\C)$ such that $\rho(R_1)$ is a complex reflection and $\rho(J)$ has three distinct eigenvectors which do not intersect the fixed line of $\rho(R_1)$ are given, up to conjugation, by the matrices
\begin{align}
J&= 
\begin{bmatrix}
0 & 0 & 1 \\
-1 & 0 & 0 \\
0 & 1 & 0
\end{bmatrix}
\end{align}
and 
\begin{align}
\rho(R_1)&= 
\begin{bmatrix}
1 & -r_1 & r_1 \\
0 & 1-r_2 & r_2 \\
0 & 1-r_2-\omega & r_2+\omega
\end{bmatrix},
\end{align} 
 with $(r_1,r_2)$ 
 in the following list

 \begin{eqnarray*}
\alpha_1 &= &( 0,1),\\
\alpha_2 &= &(\omega-1,1),\\
\alpha_3 &= &( \frac{1}{2}(\omega-1),\frac{1}{2}(\omega+3)),\\
\alpha_4 &= &( \frac{1}{2}(\omega-1),\frac{-1}{2}(3\omega+1)),\\
\alpha_5 &= &( 0,-\omega),\\
\alpha_6 &= &(\omega-1,-\omega),\\
\alpha_7 & = & \left( \frac{1}{2}(\zeta_9^3+\zeta_9^2-\zeta_9-1), \frac{1}{2}(-\zeta_9^3+\zeta_9^2+\zeta_9+1)\right),\\
\alpha_8 & = & \left( \frac{1}{2}(\zeta_9^5+\zeta_9^4+\zeta_9^3+\zeta_9-1), \frac{1}{2}(\zeta_9^5-\zeta_9^4-\zeta_9^3-\zeta_9+1)\right),\\
\alpha_9 & = & \left( \frac{1}{2}(-\zeta_9^5-\zeta_9^4+\zeta_9^3-\zeta_9^2-1),\frac{1}{2}(-\zeta_9^5+\zeta_9^4-\zeta_9^3-\zeta_9^2+1)\right),\\
\alpha_{10} & = & \left( \frac{1}{2}(\zeta_9^3+\zeta_9^2-\zeta_9-1), \frac{1}{2}(-\zeta_9^3-\zeta_9^2-\zeta_9+1)\right),\\
\alpha_{11} & = & \left(\frac{1}{2}(-\zeta_9^5-\zeta_9^4+\zeta_9^3-\zeta_9^2-1), \frac{1}{2}(\zeta_9^5-\zeta_9^4-\zeta_9^3+\zeta_9^2+1)\right),\\
\alpha_{12} & = & \left( \frac{1}{2}(\zeta_9^5+\zeta_9^4+\zeta_9^3+\zeta_9-1), \frac{1}{2}(-\zeta_9^5+\zeta_9^4-\zeta_9^3+\zeta_9+1)\right),\\
\alpha_{13} & = & \left( \frac{1}{2}(\zeta_9^3-\zeta_9^2+\zeta_9-1), \frac{1}{2}(-\zeta_9^3-\zeta_9^2-\zeta_9+1)\right),\\
\alpha_{14} & = & \left( \frac{1}{2}(\zeta_9^5+\zeta_9^4+\zeta_9^3+\zeta_9^2-1), \frac{1}{2}(\zeta_9^5-\zeta_9^4-\zeta_9^3+\zeta_9^2+1)\right),\\
\alpha_{15} & = & \left( \frac{1}{2}(-\zeta_9^5-\zeta_9^4+\zeta_9^3-\zeta_9-1), \frac{1}{2}(-\zeta_9^5+\zeta_9^4-\zeta_9^3+\zeta_9+1)\right).
\end{eqnarray*}

\end{prop}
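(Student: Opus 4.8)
The plan is computational, following the recipe announced in the introduction: first pin down a normal form for the pair $(\rho(J),\rho(R_1))$ that exhausts the conjugation freedom, then read off the polynomial equations forced by the relators of the presentation \eqref{eq:RPres(3,6)} and solve them by a Gr\"obner basis computation.

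First I would normalize $\rho(J)$. Since $\rho(J)$ is regular elliptic with $\rho(J)^3=Id$ in $\pgl(3,\C)$, any lift to $\gl(3,\C)$ has three distinct eigenvalues whose cubes coincide, so projectively they are the three cube roots of a common scalar; hence $\rho(J)$ is conjugate to the companion-type matrix $J$ displayed in the statement (one checks $J^3=-I$ and that its characteristic polynomial $\lambda^3+1$ has simple roots). This uses up all conjugation freedom except the centralizer $Z(J)\subset\pgl(3,\C)$; as $J$ is regular semisimple, $Z(J)=\{aI+bJ+cJ^2\}$ modulo scalars is a $2$-dimensional torus, and that torus is the residual freedom I still have to spend on $\rho(R_1)$.

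Next I would parametrize $\rho(R_1)$. A complex reflection with $\rho(R_1)^3=Id$ has, after projective scaling, eigenvalues $1,1,\mu$ with $\mu^3=1$ and $\mu\neq 1$, i.e. $\mu\in\{\omega,\omega^2\}$; I fix $\mu=\omega$, the choice $\omega^2$ producing the complex-conjugate Galois family. Such a reflection $I+(\omega-1)\,vn^{T}/(n^{T}v)$ is determined by its mirror (a line in $\C P^2$, normal covector $n$) and its center (the point $v$), that is $2+2=4$ complex parameters. The non-degeneracy hypothesis that no fixed point of $\rho(J)$ lies on the mirror means exactly that $n$ pairs nontrivially with all three eigenvectors of $J$; in the eigenbasis this says $n$ has all coordinates nonzero, so the diagonal torus $Z(J)$ can move it to the fixed line $\{x_2=x_3\}$, $n=(0,1,-1)$, which is itself non-degenerate (none of the eigenvectors $(1,\lambda^2,\lambda)$ of $J$ satisfy $\lambda^2=\lambda$). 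The two remaining parameters are then the coordinates of the center, and writing the reflection in this basis reproduces the displayed matrix exactly, with $e_1$ and $(0,1,1)$ spanning the fixed line and $(-r_1,-r_2,1-r_2-\omega)$ the $\omega$-eigenvector. This is the step I would treat most carefully, since I must verify that under non-degeneracy the torus normalization is \emph{lossless}: every representation of this stratum is hit, and no two distinct $(r_1,r_2)$ give conjugate pairs.

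With $\rho(J)=J$ and $\rho(R_1)$ in this $2$-parameter form, the relations $J^3=Id$ and $R_1^3=Id$ hold automatically from the eigenvalue choices, so all content is carried by the two remaining relators $(R_1J)^{12}=Id$ and $R_1JR_1J^2R_1JR_1^2J^2R_1^2JR_1^2J^2=Id$. In $\pgl(3,\C)$ a relator $W=Id$ means $W$ is scalar, i.e. its off-diagonal entries vanish and its diagonal entries agree; substituting the normal form converts this into a polynomial system in $r_1,r_2$ with coefficients in $\Q(\omega)$. Solving it is where SAGE and Maple enter: a Gr\"obner basis shows the variety is zero-dimensional and consists of precisely the fifteen points $\alpha_1,\dots,\alpha_{15}$ (with coordinates in $\Q(\zeta_9)$), and for each I would confirm a posteriori that the configuration is genuinely non-degenerate. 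The main obstacle is purely computational: the relator $(R_1J)^{12}$ alone pushes the degree of the equations into the twenties, so the elimination cannot be done by hand, and the only genuinely human part of the argument is the book-keeping of the normalization step, namely checking that the chosen $2$-parameter form is both exhaustive and free of redundancy under the residual torus action.
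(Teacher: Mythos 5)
Your outline reproduces the paper's proof: normalize so that $\rho(J)$ is the standard order-three matrix and the mirror of $\rho(R_1)$ is the line $\{x_2=x_3\}$, reduce to the two-parameter family $R_1(r_1,r_2)$, and let a Gr\"obner basis computation dispose of the two remaining relators. The paper phrases the normalization as transitivity of $\pgl(3,\C)$ on configurations (three points and a line missing them) and recovers the eigenvalue constraint by solving $\rho(R_1)^3=Id$ on a general matrix fixing the line pointwise, but that differs from your ordering of the steps only cosmetically; the exhaustiveness half of your normalization argument is correct, and the computational core is identical.

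The genuine error is in the step you yourself flagged as the delicate one. You assert that the centralizer of $J$ in $\pgl(3,\C)$ is the two-dimensional torus $\{aI+bJ+cJ^2\}$ modulo scalars, and you propose to verify that the normal form is \emph{free of redundancy}. That centralizer claim is false in $\pgl(3,\C)$: since $\omega J$ has the same characteristic polynomial $\lambda^3+1$ as $J$ and both are regular semisimple, there exists $g$ with $gJg^{-1}=\omega J$, so $[g]$ centralizes $[J]$ in $\pgl(3,\C)$ while cyclically permuting its three fixed points. The projective centralizer therefore has component group $\Z/3$, and because the torus acts simply transitively on non-degenerate lines, each nontrivial component contains exactly one element that also preserves the normalized mirror. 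This residual $\Z/3$ acts on your parameter plane and genuinely identifies listed solutions: tracking where $g$ sends the $\omega$-eigenvector (the center) of $R_1(\alpha_1)$ gives $gR_1(\alpha_1)g^{-1}=R_1(\alpha_4)$ and $g^2R_1(\alpha_1)g^{-2}=R_1(\alpha_2)$, so $\alpha_1,\alpha_2,\alpha_4$ are mutually conjugate representations. (Consistency check: $gJg^{-1}=\omega J$ forces $\mathrm{tr}(R_1(\alpha_1)J)=\omega\cdot\mathrm{tr}(R_1(\alpha_4)J)$, and indeed $\mathrm{tr}(R_1 J)=r_1+r_2$ equals $1$ at $\alpha_1$ and $\bar\omega$ at $\alpha_4$.) This does not falsify the proposition as literally stated, which—like the paper's own proof—asserts only that every representation of this type is conjugate to one on the list and is silent about irredundancy; so your argument still establishes the statement. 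But the verification you singled out, that no two distinct $(r_1,r_2)$ give conjugate pairs, would fail if carried out, and the failure matters: the fifteen listed parameters represent fewer than fifteen conjugacy classes, a subtlety that the paper's enumeration (and its tables) also does not address.
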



\begin{proof}

The projective group acts transitively on ordered sets of three distinct points and a line not containing them in $\C P^2$.
One observes that the fixed points of $J$ are $[1,1,-1]$, $[ \omega, \bar{\omega}, 1]$, $[ \bar{\omega}, \omega, 1]$ and the line fixed by $R_1$ (generated by $[1,0,0]$ and $[1,1,1]$) does not meet the fixed points. 
 
 Then the matrix of $\rho(J)$ may be fixed to be the one in the statement of the lemma and the matrix of $\rho(R_1)$ will be of the form
 \begin{align}
\rho(R_1)&= 
\begin{bmatrix}
1 & -r_1 & r_1 \\
0 & 1-r_2 & r_2 \\
0 & 1-r_3 & r_3
\end{bmatrix}.
\end{align} 
By a computation, $\rho(R_1)^3=Id$ if and only if $r_3=r_2+x$ with $x^2+x+1=0$.

In fact, the first six solutions appear from a simple factorisation of the equations.  The others come from the following system:
 {\tiny{
$$
(8r_2^3+6(i\sqrt{3}-3)r_2^2-12 (i \sqrt{3}-1)r_2+6i\sqrt{3}-1)(8r_2^3+6(i\sqrt{3}-3)r_2^2-12 (i \sqrt{3}-1)r_2+6i\sqrt{3}+1)=0,
$$
$$
(32i r_2^4\sqrt{3}-96( i+1) r_2^3\sqrt{3}+(92 i +276)\sqrt{3}r_2^2-25i \sqrt{3}-12r_1-264r_2+75)( 8r_2^3+6(i\sqrt{3}-3)r_2^2-12 (i \sqrt{3}-1)r_2+6i\sqrt{3}-1)=0,
$$
$$
({3}-i\sqrt{3})r_1 +({3}-i\sqrt{3})r_2+2r_1^2-2r_2^2-1+\sqrt{3}=0.
$$
}
}

Writing

\begin{eqnarray*}
p_1(r_2)&=&8r_2^3+6(i\sqrt{3}-3)r_2^2-12 (i \sqrt{3}-1)r_2+6i\sqrt{3}-1,\\
p_2(r_2)&=&8r_2^3+6(i\sqrt{3}-3)r_2^2-12 (i \sqrt{3}-1)r_2+6i\sqrt{3}+1,
\end{eqnarray*}
both polynomials factor over $\Q(\zeta_9)$. For each root of $p_1,$ the corresponding values of $r_1$ satisfy a quadratic polynomial equation which factors over $\Q(\zeta_9)$. In a similar way, for each root of $p_2,$ the corresponding values of $r_1$ satisfy a linear polynomial equation whose root belong to $\Q(\zeta_9)$. Therefore the whole set of solutions belongs to the field $\Q(\zeta_9)$.   The solutions are then written in the basis of $\Q(\zeta_9)$.
 \end{proof}

\begin{rk}
\begin{itemize}
\item The general procedure in the SAGE notebooks goes as follows.
With the group relations we produce a system of equations and using SAGE we construct a polynomial ideal over $\mathbb{Q}[r_1,r_2,x]$. Then we produce a Gr\"oebner basis for our system of equations. This has, as output, an array of polynomials that generates our initial ideal. 
Finally, we solve these polynomials for each variable to find the solutions.

\item For each solution $\alpha_j,$ we take lifts into $\slc(3,\C)$ of the form: 
\[k\cdot J \quad \mbox{and} \quad l\cdot R_1(\alpha_j).\]

In order to have a lift of the representation, the previous matrices should satisfy the group relations. Therefore, we use the relations to obtain a system of equations in the variables $k$ and $l.$ It is interesting to note that in the (3,6) case only the first six representations can be lifted to $\gl(3,\C)$ representations (see \cite{GH}).

\end{itemize}
\end{rk}

We now look at the \emph{degenerate configurations}, where one eigenvector of $\rho(J)$ intersects the fixed complex line of $\rho(R_1)$.
This gives rise to reducible representations, but note that they might not be completely reducible.

\begin{prop}
The representation $\rho: \Gamma \rightarrow \pgl(3,\C)$ such that $\rho(R_1)$ is a complex reflection and $\rho(J)$ has three distinct eigenvectors with one intersecting the fixed line of $\rho(R_1)$ are given, up to conjugation, by the matrices
\begin{align}
\rho(J)&= 
\begin{bmatrix}
1 & 0 & 0 \\
0 &e^{2i\pi/3} & 0 \\
0 & 0 & e^{4i\pi/3}
\end{bmatrix}
\end{align}
and 
\begin{align}
\rho(R_1)&= 
\begin{bmatrix}
1 & -1 & 1 \\
0 & 1-r_2 & r_2 \\
0 & 1-r_2-x & r_2+x
\end{bmatrix}
&&
or
&
\label{Eq:BlockR1}
\rho(R_1)&= 
\begin{bmatrix}
1 & 0 & 0 \\
0 & 1-r_2 & r_2 \\
0 & 1-r_2-x & r_2+x
\end{bmatrix},
\end{align} 
where $x^2+x+1=0$ and with $r_2$ determined as follows:
 \begin{enumerate}
 \item for $x=e^{2i\pi/3}$, $r_2= 1-\sqrt{3}i/3$ or $r_2= 1/2-\sqrt{3}i/6$,

 \item for $x=e^{4i\pi/3}$, $r_2= 1+\sqrt{3}i/3$ or $r_2= 1/2+\sqrt{3}i/6$.
 
\end{enumerate}

\end{prop}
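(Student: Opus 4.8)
The plan is to follow the same normalise-then-solve strategy as in Proposition~\ref{prop:Parameters36}, but adapted to the degenerate configuration and exploiting the reducibility it forces. First I would normalise $\rho(J)$: since it is regular elliptic and $\rho(J)^3$ is scalar (because $J^3=Id$ in $\Gamma$), its eigenvalues are $\lambda,\lambda\omega,\lambda\omega^2$, and rescaling in $\pgl(3,\C)$ lets me take them to be $1,\omega,\omega^2$ with $\omega=e^{2i\pi/3}$. Sending the three eigenvectors to the standard basis gives $\rho(J)=\mathrm{diag}(1,\omega,\omega^2)$ and reduces the residual freedom to the diagonal torus fixing $e_1,e_2,e_3$. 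I would then record the extra symmetry that the cyclic permutation $(e_1\,e_2\,e_3)$, composed with the scalar $\omega$, fixes this diagonal matrix in $\pgl(3,\C)$; this lets me assume, without loss of generality, that the unique $\rho(J)$-eigenvector lying on the mirror of $\rho(R_1)$ is $e_1$.

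Next I would pin down $\rho(R_1)$. The projective group acts transitively on configurations consisting of an ordered triple $e_1,e_2,e_3$ together with a line through $e_1$ meeting neither $e_2$ nor $e_3$ (the degeneracy hypothesis being exactly that only one eigenvector lies on the mirror). After scaling the mirror eigenvalue to $1$, the reflection fixes $e_1$ and fixes the single intersection point of its mirror with $\{x_1=0\}$; the torus $\mathrm{diag}(1,b,c)$ sends that point to $[0,1,1]$, and keeping this normalisation forces $b=c$, so a one-parameter torus $\mathrm{diag}(t,1,1)$ remains. Conjugating by it scales the top-row entries $(-r_1,r_1)$ by $t$, so I may normalise $r_1\in\{0,1\}$, which produces exactly the two stated forms of $\rho(R_1)$. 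Imposing $\rho(R_1)^3=Id$ in $\pgl(3,\C)$ forces $x^3=1$ with $x\neq1$, i.e. $x^2+x+1=0$; as $\rho(R_1)$ is then diagonalisable with eigenvalues $1,1,x$, this imposes nothing further, leaving $r_2$ (and the normalised $r_1$) free at this stage.

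The key simplification is that $e_1$ is now fixed with eigenvalue $1$ by both $\rho(J)$ and $\rho(R_1)$, so $\rho$ is block upper triangular with respect to $\C e_1\subset\C^3$. I would pass to the $2$-dimensional quotient $\bar\rho$ on $\C^3/\C e_1$, where $\bar\rho(J)=\mathrm{diag}(\omega,\omega^2)$ and $\bar\rho(R_1)$ is the lower-right $2\times2$ block (identical for both forms). Because both generators act by $1$ on $e_1$, every relator $W$ acts on $e_1$ by $1$, so $\rho(W)=\lambda I$ in $\pgl$ forces $\lambda=1$; hence the remaining relations $(R_1J)^{12}=Id$ and $R_1JR_1J^2R_1JR_1^2J^2R_1^2JR_1^2J^2=Id$ are equivalent to the exact identities $\bar\rho(W)=I_2$ in $\gl(2,\C)$ together with vanishing of the top-right extension block of each relator. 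A short computation gives $\det\bar\rho(R_1)=x$ and $\det\bar\rho(J)=1$, so $\bar\rho(R_1J)$ has determinant $x$, and $\bar\rho((R_1J)^{12})=I_2$ becomes the condition that its two eigenvalues be $12$-th roots of unity with product $x$; this is a single condition on the trace and hence a finite list of candidate $r_2$. I would cut this list down with the long relator by forming the polynomial ideal in $r_2$ generated by the entries of $\bar\rho(W)-I_2$ for both relators and computing a Gr\"obner basis, exactly as in the companion SAGE/MAPLE files; the common solutions are the four stated values of $r_2$, two for each choice of $x$.

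Finally I would separate the two forms. For $r_1=0$ the representation is the direct sum $1\oplus\bar\rho$, so the relations hold as soon as $\bar\rho(W)=I_2$, and the block-diagonal solutions are immediate. For $r_1=1$ the representation is a genuine non-split extension, and beyond $\bar\rho(W)=I_2$ one must still verify that the top-right $1\times2$ extension block of each relator vanishes. This is the one place where the full $3\times3$ structure is not captured by the $2$-dimensional quotient, so I expect it to be the main obstacle; I would dispatch it by direct substitution of the solved values of $r_2$ and $x$ into the two relator words, which the computation confirms for the listed solutions.
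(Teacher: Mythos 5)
Your proposal is correct and takes essentially the same route as the paper: diagonalise $\rho(J)$, use the residual diagonal conjugation to force the mirror of $\rho(R_1)$ through $[1,0,0]$ and $[0,1,1]$ and to normalise the top row to one of the two stated forms ($r_1\in\{0,1\}$), then determine $r_2$ from the remaining relators by a Gr\"obner basis computation. Your additional observation that both generators fix $e_1$ with eigenvalue $1$ --- so the projective relations become exact matrix identities, split into $2\times 2$ quotient conditions plus vanishing extension blocks --- is a clean refinement of how the computation is organised, but not a different method.
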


\begin{proof}
We choose a diagonal form for $\rho(J)$ and impose that $\rho(R_1)$ has the same eigenvector $e_1$. One can then impose that the fixed line in projective space passes through $[1,0,0]$ and $[1,1,1]$. By conjugating by a diagonal matrix, one can suppose that $\rho(R_1)$ is as above with $r_2$ to be determined. As in the previous proposition, a computation using Gr\"oebner basis package in  SAGE gives the result. Note that the two sets of solutions are complex conjugates.

\end{proof}

{
\begin{rk}
Note that for solutions with the second form of $R_1$ in \eqref{Eq:BlockR1}, the image of the group $\langle J, R_1\rangle$ is contained in a copy of $\slc(2,\C)$ inside $\psl(3,\C).$ This implies that, for each solution, the group is contained in a $\psl(2,\C)-$representation of the triangle group $(3,3,12).$ In general, for the $(p,k)$- lattice, we can say that the image is contained in a $\psl(2,\C)-$representation of a triangle group $(3,p,2k).$

Note that if we remove the condition $x^2+x+1=0$ in the previous proposition, there exists a unique (extra) solution for which $R_1=Id.$ This solution is part of the representations with type-preserving generators, and whose image is a cyclic group of order three generated by $J.$ For the lattice (3,6), this is the unique reducible representation with finite image. This unique representation is contained also in representations for lattices (4,3), (5,3), (6,2), and (6,3). This solution is already taken into account in Tables \ref{Tb:CompactDeligneMostow} and \ref{Tb:NonCompactDeligneMostow}.
\end{rk}
}

\subsubsection{$\rho(J)$ is a complex reflection and $\rho(R_1)$ is regular elliptic}

Interchanging the type of $\rho(J)$ and $\rho(R_1)$, a computation with Gr\"obner basis proves the following:

\begin{prop}\label{prop:inverted}
There are no representations $\rho: \Gamma \rightarrow \pgl(3,\C)$ such that $\rho(J)$ is a complex reflection and $\rho(R_1)$ has three distinct eigenvectors which do not intersect the fixed line of $\rho(R_1)$.
\end{prop}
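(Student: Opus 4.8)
The plan is to mirror the normalization-plus-Gr\"obner-basis strategy of Proposition~\ref{prop:Parameters36}, with the roles of the two generators exchanged. Reading the hypothesis in line with the running case distinction (the fixed line in question is that of the complex reflection $\rho(J)$, not of $\rho(R_1)$), the non-degeneracy assumption says precisely that the three eigenvectors of the regular elliptic element $\rho(R_1)$ avoid the mirror of $\rho(J)$. Since $\pgl(3,\C)$ acts simply transitively on configurations consisting of an ordered triple of points in general position together with a line missing all three, I would first use this freedom to normalize the configuration completely. Because $\rho(R_1)^3=Id$ and $\rho(R_1)$ is regular elliptic, its eigenvalues are forced (up to a common scalar) to be the three distinct cube roots of unity, so I put $\rho(R_1)=\mathrm{diag}(1,\omega,\omega^2)$ with eigenvectors at the coordinate points, and I place the mirror of $\rho(J)$ on the line $\{x+y+z=0\}$. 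The coordinate points satisfy $x+y+z=1\neq 0$, so this realizes exactly the non-degenerate configuration, and the condition is thereby built into the normalization.

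It remains to describe $\rho(J)$. As a complex reflection with mirror $\{n^{T}x=0\}$, $n=(1,1,1)$, isolated fixed point $v=[v_1:v_2:v_3]$ and eigenvalue $\mu$, it has the form $\rho(J)=I+(\mu-1)\,\frac{v\,n^{T}}{n^{T}v}$, valid when $n^{T}v=v_1+v_2+v_3\neq 0$. The relation $\rho(J)^3=Id$ forces $\mu^3=1$, and $\mu\neq 1$ since $\rho(J)$ is a genuine reflection, so $\mu\in\{\omega,\omega^2\}$. After normalizing $v_3=1$, the only continuous unknowns are $v_1,v_2$, and there are just two discrete branches, one for each value of $\mu$ (the reorderings of the eigenvalues of $\rho(R_1)$ among the coordinate axes are realized by permutation matrices, which preserve both the set of coordinate points and the line $\{x+y+z=0\}$, so they produce no further branches).

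I would then impose the two remaining relators of the presentation \eqref{eq:RPres(3,6)}, namely $(\rho(R_1)\rho(J))^{12}=Id$ and the long relator $\rho(R_1)\rho(J)\rho(R_1)\rho(J)^2\rho(R_1)\rho(J)\rho(R_1)^2\rho(J)^2\rho(R_1)^2\rho(J)\rho(R_1)^2\rho(J)^2=Id$, each read as an equality in $\pgl(3,\C)$, i.e.\ as proportionality of the left-hand matrix to the identity. Clearing the denominator $n^{T}v$ and expressing proportionality to $I$ through the vanishing of the off-diagonal entries (equivalently the relevant $2\times 2$ minors) turns these into a system of polynomial equations in $v_1,v_2$ with coefficients in $\Q(\omega)$ (indeed in $\Q(\zeta_9)$ once the twelfth power is expanded). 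For each of the two branches $\mu\in\{\omega,\omega^2\}$ I would form the ideal generated by these polynomials, saturate at $v_1+v_2+v_3$ to discard the spurious locus where the reflection degenerates, and compute a Gr\"obner basis. The assertion of the proposition is exactly that each of these Gr\"obner bases contains the constant $1$, so that the corresponding variety is empty; since this holds on every branch, no such representation exists.

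The genuine content, and the main obstacle, is this final elimination computation: it is not an argument one expects to carry out by hand, and its correctness rests on two modelling points. First, ``equals the identity in $\pgl(3,\C)$'' must be encoded \emph{projectively} (as proportionality to $I$, not as a literal matrix equality of a fixed $\slc(3,\C)$ lift), since a priori the relators need only hold up to scalar. Second, the denominators and degeneracy loci must be removed by saturation so that the emptiness verdict genuinely concerns honest complex reflections $\rho(J)$ in the non-degenerate configuration; one should also confirm that the two discrete branches exhaust the possibilities before declaring the solution set empty. Once these are in place, the membership $1\in I$ in the Gr\"obner basis is a finite certificate of non-existence, which is precisely the computer-algebra verification recorded in the companion files \cite{GH}.
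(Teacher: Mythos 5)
Your proposal is correct and follows essentially the same route as the paper, which disposes of this case in one line by the same strategy used for Proposition~\ref{prop:Parameters36}: normalize the configuration (three eigenvectors of the regular elliptic element plus the mirror of the reflection, using transitivity of $\pgl(3,\C)$ on such point--line configurations), impose the relators as polynomial equations, and certify emptiness by a Gr\"obner basis computation in the companion SAGE/Maple files. Your additional care about encoding the relators projectively, saturating away the degenerate loci, and reading the statement's ``fixed line'' as that of $\rho(J)$ matches the paper's intent; the only cosmetic difference is that you diagonalize $\rho(R_1)$ rather than reusing the paper's non-diagonal normal form for the regular elliptic generator.
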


\subsubsection{$\rho(J)$ and $\rho(R_1)$ are regular elliptic}

The final case to consider occurs when both $\rho(J)$ and $\rho(R_1)$ are regular elliptic. A simple computation shows that the two regular elliptic elements cannot have the same eigenspaces.

\begin{prop} The representations $\rho: \Gamma \rightarrow \pgl(3,\C)$ such that $\rho(R_1)$ and $\rho(J)$ are regular elliptic with at least one distinct eigenspace are given, up to conjugation, by the matrices
\begin{align}
J&= 
\begin{bmatrix}
0 & 0 & 1 \\
-1 & 0 & 0 \\
0 & 1 & 0
\end{bmatrix}
\end{align}
and 
\begin{align}
\rho(R_1)&= 
\begin{bmatrix}
1 & s_1 & r_1 \\
0 & s_2 & r_2 \\
0 & s_3 & -1-s_2
\end{bmatrix},
\end{align} 
 with $r_1$, $r_2$, $s_1$, $s_2$, $s_3$ 
 in the following list (up to complex conjugation)
 
 \begin{enumerate}

 \item $r_1 = 0$, $r_2 = -1$, $s_1 = -1$, $s_2 = -1$, $s_3 = 1$,
\item $r_1 = 1$, $r_2 = 1$, $s_1 = 0$, $s_2 = 0$, $s_3 = -1$,
\item $r_1 = 0$, $r_2 = -\omega$, $s_1 = \omega$, $s_2 = -1$, $s_3 = \bar{\omega}$,
 \item $r_1=0$, $r_2=0,s_1=0$, $s_2=\omega$, $s_3=0$,
\item $r_1 = 6/7+4/7i\sqrt{3}$, $r_2 = -4/7+2/7i\sqrt{3}$, $s_1 = -4/7+2/7i\sqrt(3)$, $s_2 = -11/14-5/14i\sqrt{3}$, $s_3 = 1/7+3/7i\sqrt{3}$,
\item $r_1 = 4/7-2/7i\sqrt{3}$, $r_2 = 1/7+3/7i\sqrt{3}$, $s_1 = -6/7-4/7i\sqrt(3)$, $s_2 = -3/14+5/14i\sqrt{3}$, $s_3 = -4/7+2/7i\sqrt{3}$,
\item $r_1 = \omega$, $r_2 = \bar{\omega}$, $s_1 = 0$, $s_2 = 0$, $s_3 = -\omega$,
\item $r_1 = 3/7-5/7i\sqrt{3}$, $r_2 = -1/7-3/7i\sqrt{3}$, $s_1 = -1/7-3/7i\sqrt(3)$, $s_2 = -11/14-5/14i\sqrt{3}$, $s_3 = 4/7-2/7i\sqrt{3}$,
\item $r_1 = -9/7+1/7i\sqrt{3}$, $r_2 = 5/7+1/7i\sqrt{3}$, $s_1 = 5/7+1/7i\sqrt(3)$, $s_2 = -11/14-5/14i\sqrt{3}$, $s_3 = -5/7-1/7i\sqrt{3}$,
\item $r_1 = 1/7+3/7i\sqrt{3}$, $r_2 = 4/7-2/7i\sqrt{3}$, $s_1 = -3/7+5/7i\sqrt(3)$, $s_2 = -3/14+5/14i\sqrt{3}$, $s_3 = -1/7-3/7i\sqrt{3}$,
\item $r_1 = -5/7+1/7i\sqrt{3}$, $r_2 = -5/7+1/7i\sqrt{3}$, $s_1 = 9/7+1/7i\sqrt(3)$, $s_2 = -3/14-5/14i\sqrt{3}$, $s_3 = 5/7-1/7i\sqrt{3}$,


\end{enumerate}

\end{prop}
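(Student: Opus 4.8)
The proof will follow the template of Proposition~\ref{prop:Parameters36}, the only structural difference being that $\rho(R_1)$ is now regular elliptic, so it carries three distinct eigen-points in $\C P^2$ rather than a fixed line. First I would normalise the generators. Since $\rho(J)$ is regular elliptic of order $3$, its eigenvalues are projectively the three cube roots of unity and $\pgl(3,\C)$ acts transitively on such elements by conjugation; I may therefore fix $\rho(J)$ to the displayed companion matrix, whose eigen-points are $[1,1,-1]$, $[\omega,\bar\omega,1]$, $[\bar\omega,\omega,1]$. The residual freedom is then the centraliser $Z(\rho(J))$, which in the eigenbasis of $\rho(J)$ is the diagonal torus $(\C^*)^2$ and acts transitively on the points of $\C P^2$ having all three coordinates nonzero. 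As $\rho(R_1)$ is regular elliptic of order $3$, after a projective rescaling its eigenvalues are exactly $1,\omega,\bar\omega$; in particular it has an eigenvector of eigenvalue $1$, and for a generic such eigenvector I can use $Z(\rho(J))$ to move it to $e_1=[1,0,0]$. This forces the first column of $\rho(R_1)$ to be $(1,0,0)^{T}$, and imposing trace zero (because $1+\omega+\bar\omega=0$) accounts for the $(3,3)$ entry $-1-s_2$, leaving precisely the five parameters $r_1,r_2,s_1,s_2,s_3$ of the displayed matrix. With this normalisation the relation $\rho(R_1)^3=Id$ collapses to the single equation $\det\rho(R_1)=1$, since eigenvalue $1$ together with trace $0$ already forces the characteristic polynomial to be $(\lambda-1)(\lambda^2+\lambda+\det\rho(R_1))$. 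The preliminary observation that two regular elliptic elements cannot share all three eigenspaces guarantees that $\langle\rho(J),\rho(R_1)\rangle$ is non-abelian, so no solution is lost by this parametrisation; the finitely many non-generic placements of the chosen eigenvector (where it meets a coordinate line of the $\rho(J)$-eigenbasis, in particular a shared eigenspace) are handled separately as easy special cases.

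Next I would convert the remaining relators of \eqref{eq:RPres(3,6)} into polynomial equations in $r_1,r_2,s_1,s_2,s_3$. Working in $\pgl(3,\C)$, a relation $W=Id$ means that the matrix word $W$ is scalar, which I encode by setting its off-diagonal entries to zero and equating its diagonal entries. Doing this for $(\rho(R_1)\rho(J))^{12}=Id$ and for the long relator of \eqref{eq:RPres(3,6)}, together with $\det\rho(R_1)=1$, assembles an ideal in $\Q(\omega)[r_1,r_2,s_1,s_2,s_3]$. I would then compute a Gr\"obner basis in SAGE exactly as in the complex-reflection case and solve the resulting triangular system variable by variable, obtaining a finite solution set.

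Finally I would post-process the output: discard any solution for which $\rho(R_1)$ degenerates to a complex reflection or the identity (a repeated eigenvalue), collect the genuine solutions into complex-conjugate pairs — which is why the list is stated up to complex conjugation — and record the field of definition of each, namely $\Q$ for the first two and $\Q(\omega)=\Q(\sqrt{-3})$ (the origin of the recurring $i\sqrt3$) for the remainder. I expect the main obstacle to be entirely computational rather than conceptual: the twelfth power $(\rho(R_1)\rho(J))^{12}$ and the degree-twelve relator produce high-degree, many-term polynomials, so the genuine work is the Gr\"obner basis computation and the certification that the extracted list of eleven classes is complete, zero-dimensional, and free of spurious components.
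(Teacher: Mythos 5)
Your proposal follows essentially the same route as the paper's proof: fix $\rho(J)$ to the companion form, use the residual conjugation freedom to place an eigenvector of $\rho(R_1)$ at $e_1$, impose the null-trace condition for an order-three regular elliptic element to get the displayed five-parameter matrix, and then reduce the relator equations via a Gr\"obner basis computation. Your extra observations --- that $\rho(R_1)^3=Id$ collapses to $\det\rho(R_1)=1$ under this normalisation, and that non-generic placements of the eigenvector relative to the $\rho(J)$-eigenbasis need separate (if easy) treatment --- are correct refinements of the paper's terser argument, not departures from it.
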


\begin{proof}
We can again fix $J$ to be of the form above and one eigenvector for $R_1$ to be the first column vector. Observe that a regular elliptic element of order three has null trace and, therefore, we obtain the form of the matrix for $R_1$.
We obtain the result finding a Gr\"obner basis for the system of equations as in the previous proposition.
\end{proof}

\subsection{The Galois Conjugation}

Consider the $\Q-$extension by the cyclotomic polynomial $x^6+x^3+1,$ that we will denote $\Q(\zeta_9)$ where $\zeta_9$ is again the 9-th root of unity. This extension has a subfield isomorphic to the $\Q-$extension by $x^2+x+1,$ that we will denote by $\Q(\omega)$ where we recall that $\omega=\zeta_9^3$. Recall also that the Galois group of $\Q(\zeta_9),$ $\Gal(\Q(\zeta_9)/\Q),$ is isomorphic $\Z_6$.

{
\begin{rk}
For the first six solutions in Proposition \ref{prop:Parameters36}, the group image belongs to $\pgl(3,\mathcal{O}_3)$ up to a projective representative, therefore the corresponding groups have discrete image. This follows from the fact that $\mathcal{O}_3$ is a discrete set in $\C.$

For the other solutions, up to a projective representative, the image belongs to $\pgl(3,\mathcal{O}_9)$ where $\mathcal{O}_9$ denotes the ring of integers of $\Q(\zeta_9)$. Unfortunately, we do not know if the image of these groups is discrete.

In general, for every representation of the $(p,k)-$lattice that preserves generators types, we have (up to a projective representative) that its image is a subgroup of $\pgl(3,\mathcal{O}_j)$ where $\mathcal{O}_j$ is the ring of integers of the extension $\Q(\zeta_j)$ that contains the solution. 
\end{rk}}

The action of the Galois group $\Gal(\Q(\zeta_9)/\Q)$ over the field $\Q(\zeta_9)$ induces an action over the set of solutions, therefore on the set of representations. Recall that $\Gal(\Q(\zeta_9)/\Q)$ is a cyclic group, and we will denote its generator by $g$. In particular, $g(\zeta_9)=\zeta^2_9$.
Therefore the element $g^2$ sends $\zeta_9$ to $\zeta_9^4,$ and one can verify that \[g^2(\omega)=g^2(\zeta_9^3)=\zeta_9^3,\] and so the group generated by $g^2$ is the unique subgroup of the Galois group that fixes $\omega$. Therefore, the subgroup $\langle g^2\rangle$ fixes the subfield $\Q(\omega)$ and in particular, all solutions $\{\alpha_ j\}_{j=1}^6$ are fixed points under the Galois group action.

By a direct computation, one can check that the action of $\langle g^2\rangle$ on the solutions $\{\alpha_ j\}_{j=7}^{15}$ has three closed orbits, namely $\{\alpha_7,\alpha_8,\alpha_9\},$ $\{\alpha_{10},\alpha_{11},\alpha_{12}\}$ and $\{\alpha_{13},\alpha_{14},\alpha_{15}\},$ and these are the only closed orbits in the set of solutions under the Galois group action.

\subsection{The Hermitian form}

In what follows, we will find the Hermitian form that is preserved by each representation and it will sometimes be degenerate. Assume that $H$ is an Hermitian form. 
Now if $\Gamma$ preserve $H,$ then the generators in the presentation \eqref{eq:RPres(3,6)} satisfy

\begin{equation*}
J^* H J = H,\ \ \ \
R_1^* H R_1 = H.
\end{equation*}

From the first equation, we can assure that $H$ is of the form
\begin{equation*}
\begin{bmatrix}
a & -\overline{c} & c \\
-c & a & \overline{c} \\
\overline{c} & c & a
\end{bmatrix},
\end{equation*}
where $a\in\R,$ and we can assume that $c\in \Q(\zeta_9)$. Let $c=b_0 +\Sigma_{j=1}^5 b_j \zeta_9^j$. Then the second equation provides a polynomial system on $\Q(\zeta_9)[a,b_0,b_1,b_2,b_3,b_4,b_5]$. For each solution $\alpha_i$, we use Gr\"obner basis to reduce the system into two linear equations.


Once we solved the equation, we have that for the Galois orbits $\{\alpha_7,\alpha_8,\alpha_9\}$ and $\{\alpha_{10},\alpha_{11},\alpha_{12}\}$, the Hermitian form is non-degenerate and of signature (3,0). For the orbit $\{\alpha_{13},\alpha_{14},\alpha_{15}\}$ the Hermitian form is non-degenerate of signature (2,1). In the case of the Galois fixed points $\{\alpha_1,\alpha_2,\alpha_3, \alpha_4, \alpha_5, \alpha_6\},$ all Hermitian forms are degenerate with two zero eigenvalues.

\subsection{The cusp group}

For a non-compact lattice, one defines a cusp holonomy as a conjugacy class of maximal subgroups (not containing loxodromic elements) fixing a point in the boundary of complex hyperbolic space. The following proposition was proved using a fundamental domain for the lattice (see \cite{boadiparker} and \cite{Irene}).

\begin{prop}
The Deligne-Mostow (3,6)-lattice $\Gamma_{(3,6)}\subset \PU(2,1)$ has only one cusp. The cusp holonomy is the class determined by $\Gamma_{cusp}= \langle R_2=JR_1 J^2 ,\, A_1=JR_1^2J^2R_1^2J\rangle$.
\end{prop}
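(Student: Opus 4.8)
The plan is to derive the proposition from the explicit fundamental domain $D$ for $\Gamma_{(3,6)}$ constructed in \cite{boadiparker} and reproduced in \cite{Irene}, combined with the cusp-counting mechanism built into the Poincar\'e polyhedron theorem. Recall that for a non-uniform lattice in $\PU(2,1)$ the cusps are in bijection with the $\Gamma$-orbits of parabolic fixed points on $\partial H^2_\C$, and that when $\Gamma$ is presented by a fundamental polytope these orbits correspond exactly to the equivalence classes of \emph{ideal vertices} of $D$ (the vertices lying on the boundary sphere) under the side-pairing identifications. Thus both assertions of the proposition --- that there is a single cusp, and that its holonomy class is represented by $\Gamma_{cusp}=\langle R_2, A_1\rangle$ --- reduce to a local analysis of $D$ near its ideal vertices.

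To establish that $\Gamma_{(3,6)}$ has exactly one cusp, I would first list the ideal vertices of $D$ and apply the vertex-cycle part of the Poincar\'e theorem to them. The ``one cusp'' statement is precisely the claim that all ideal vertices lie in a single cycle under the side-pairing maps; I would verify this by tracing the side-pairing identifications on the ideal vertices and checking that the induced cycle is transitive. Fixing a representative ideal vertex $\xi$, this simultaneously shows that every parabolic fixed point of $\Gamma$ is $\Gamma$-equivalent to $\xi$, which is what ``only one cusp'' means.

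For the cusp holonomy I would proceed in two directions. One inclusion is a direct computation: using the matrix presentation underlying \eqref{eq:RPres(3,6)}, I would check that $A_1=JR_1^2J^2R_1^2J$ is parabolic with unique boundary fixed point $\xi$, and that $R_2=JR_1J^2$ (a complex reflection, being conjugate to $R_1$) has its mirror meeting the boundary at $\xi$, so that $R_2(\xi)=\xi$ as well; this gives $\langle R_2, A_1\rangle \subseteq \Gamma_\xi := \mathrm{Stab}_\Gamma(\xi)$ and confirms that $\xi$ is a genuine parabolic point. The reverse inclusion is where the fundamental domain is essential: by the Poincar\'e theorem, $\Gamma_\xi$ is generated by the side-pairing transformations whose faces are incident to $\xi$, modulo the edge-cycle relations through $\xi$. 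Reading these incident side-pairings off the combinatorics of $D$, I expect them to be generated by $R_2$ and $A_1$, giving $\Gamma_\xi = \langle R_2, A_1\rangle$. To match the definition of a cusp group I would finally verify that $\langle R_2, A_1\rangle$ contains no loxodromic element and acts on a horosphere based at $\xi$ as a discrete group with compact quotient, i.e.\ as a lattice in the Heisenberg-by-finite stabiliser of $\xi$ in $\PU(2,1)$, with $A_1$ supplying a Heisenberg translation and $R_2$ the finite-order symmetry.

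The main obstacle will be the reverse inclusion, namely showing that $R_2$ and $A_1$ generate the \emph{full} stabiliser $\Gamma_\xi$ rather than a proper finite-index subgroup. The direct computation only produces some elements fixing $\xi$; the upgrade to all of $\Gamma_\xi$ relies entirely on the Poincar\'e polyhedron theorem and hence on having verified the vertex-cycle and edge-cycle conditions at $\xi$. I therefore expect the bulk of the work to be the bookkeeping of these cycle conditions near the single ideal vertex --- precisely the fundamental-domain analysis carried out in \cite{boadiparker} and \cite{Irene} --- rather than any further computation with the generators themselves.
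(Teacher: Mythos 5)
Your proposal is correct and follows essentially the same route as the paper, which proves this proposition precisely by appealing to the explicit fundamental domain of \cite{boadiparker} and \cite{Irene}: the cusp count comes from the cycle of ideal vertices under the side-pairings, and the identification of the full stabiliser $\Gamma_\xi=\langle R_2, A_1\rangle$ comes from the Poincar\'e polyhedron theorem applied at the ideal vertex. The bookkeeping you defer to the cycle conditions near the ideal vertex is exactly the content of the cited fundamental-domain analysis, so there is no gap to flag.
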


The cusp holonomy might contain elliptic elements. The purely parabolic cusp holonomy is the maximal subgroup of the cusp holonomy with no elliptics.

\begin{prop}
The purely parabolic cusp holonomy of $\Gamma_{(3,6)}$ is $\langle [A_1,R_2],[A_1,R_2^2],(R_2 A_1)^2\rangle$. The centre of this group is the cyclic group generated by $ (R_2 A_1)^2$. 
\end{prop}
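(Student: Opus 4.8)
The plan is to work entirely inside the stabiliser of the fixed boundary point $p_\infty$ of $\Gamma_{cusp}$, which in $\PU(2,1)$ is a group of Heisenberg similarities. First I would conjugate so that $p_\infty$ is the standard point at infinity; since the cusp holonomy contains no loxodromic elements by the preceding proposition, there is no dilation, and the relevant stabiliser is identified with the semidirect product $\heis \rtimes U(1)$, where the $U(1)$ factor records the rotational part of a Heisenberg isometry. Using the explicit generators and the fundamental domain of \cite{boadiparker} and \cite{Irene}, I would write $R_2$ and $A_1$ in these coordinates. The structural map is the projection $\phi \colon \heis \rtimes U(1) \twoheadrightarrow U(1)$, which is a homomorphism with kernel exactly $\heis$; by definition the \emph{purely parabolic} (unipotent) elements are those landing in $\heis$, so the purely parabolic cusp holonomy is precisely $P := \ker(\phi|_{\Gamma_{cusp}})$.

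The next step is to compute the three rotational parts. Since $R_2 = JR_1J^2$ is conjugate to the order-$3$ complex reflection $R_1$, the rotational part $\phi(R_2)$ is a primitive cube root of unity; a direct computation gives that $\phi(A_1)$ is a primitive sixth root of unity and that $\phi(R_2A_1) = -1$. Hence $\phi(\Gamma_{cusp})$ is cyclic of order $6$ and $P$ has index $6$ in $\Gamma_{cusp}$. I would then check that the three proposed elements lie in $P$: commutators automatically have trivial rotational part because $U(1)$ is abelian, so $\phi([A_1,R_2]) = \phi([A_1,R_2^2]) = 1$, and $\phi(R_2A_1)=-1$ forces $\phi((R_2A_1)^2)=1$. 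Moreover, writing $R_2A_1$ as a Heisenberg isometry with rotational part $-1$, its square has trivial horizontal part because the order-two rotation negates the horizontal component; hence $(R_2A_1)^2$ is a \emph{vertical} (central) translation of $\heis$. This already gives $\langle (R_2A_1)^2\rangle \subseteq Z(P)$ and exhibits $[A_1,R_2]$, $[A_1,R_2^2]$ as candidate horizontal generators.

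To prove the reverse inclusions I would show that these three elements generate all of $P$. Using the Schreier transversal $\{A_1^i : 0\le i\le 5\}$ for $P$ in $\Gamma_{cusp}$ (legitimate since $\phi(A_1)$ generates $\Z/6$), Reidemeister--Schreier produces an explicit generating set for $P$ consisting of $A_1^6$ together with the elements $A_1^{i}R_2A_1^{-(i+2)}$ (exponents reduced modulo $6$); I would rewrite each of these, using the Heisenberg coordinates and the relations of $\Gamma_{cusp}$, as a word in $[A_1,R_2]$, $[A_1,R_2^2]$ and $(R_2A_1)^2$. Equivalently, and probably more cleanly, I would pass to coordinates and verify that the rank-$3$ lattice generated by the three elements has the same covolume in $\heis$ as $P$, forcing equality. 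Finally, for the centre, I would identify $Z(\heis)$ with the vertical direction, note that $[A_1,R_2]$ and $[A_1,R_2^2]$ project to a basis of the horizontal lattice $P/(P\cap Z(\heis))$, and compute their commutator (a power of the vertical generator) to confirm that $(R_2A_1)^2$ generates $P\cap Z(\heis)=Z(P)$ exactly, rather than a proper subgroup.

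The main obstacle is the generation step: verifying that the three listed elements generate the entire index-$6$ kernel $P$, and not merely a finite-index sublattice, together with the companion primitivity claim that $(R_2A_1)^2$ generates the full vertical sublattice of $P$. Both reduce to exact covolume and index bookkeeping in Heisenberg coordinates, where the two-step nilpotent multiplication makes the commutator relations delicate; by contrast, the rotational-part computations and the observation that commutators and the square of the order-two-rotation element are automatically unipotent are comparatively routine.
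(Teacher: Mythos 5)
The paper itself gives no proof of this proposition: it is stated as a computational fact following from the fundamental-domain constructions in \cite{boadiparker} and \cite{Irene}, so your proposal has to stand on its own. Your framework is the right one --- work in the stabiliser $\heis\rtimes U(1)$ of the cusp point, use the rotation homomorphism $\phi$, prove generation by Reidemeister--Schreier or a covolume comparison, and reduce the centre claim to the primitivity of $(R_2A_1)^2$ in the vertical lattice together with the check that $[[A_1,R_2],[A_1,R_2^2]]$ is a power of $(R_2A_1)^2$. If all of that were carried out, it would prove the statement.

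There is, however, a genuine gap at your very first step. The paper defines the purely parabolic cusp holonomy as the \emph{maximal subgroup of $\Gamma_{cusp}$ containing no elliptic elements}, and you assert that ``by definition'' this equals $P=\ker(\phi|_{\Gamma_{cusp}})$. That is not a definition-chase: an element with nontrivial rotation part need not be elliptic --- it can be screw (ellipto-)parabolic --- so elliptic-free subgroups need not lie in $\ker\phi$. Your own computation makes this concrete: $R_2A_1$ is screw parabolic with rotation part $-1$ (it is not elliptic, since $(R_2A_1)^2$ is a nontrivial vertical translation), so $\langle R_2A_1\rangle$ is an elliptic-free subgroup \emph{not} contained in $\ker\phi$; in particular maximal elliptic-free subgroups are not unique under the literal definition, and the object the proposition describes is $\ker\phi$ only because one can show that every elliptic-free subgroup containing the three listed generators (equivalently, once your generation step is done, containing $\ker\phi$) lies in $\ker\phi$. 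That extra verification is missing from your plan, but it is cheap here: since $A_1=R_2^{-1}R_1^{-1}J=P^{-1}J$, the relation $(P^{-1}J)^{k}=Id$ with $k=6$ gives $A_1^6=Id$, so $A_1$ is elliptic and its powers $A_1,\dots,A_1^5$ are elliptic representatives of all five nontrivial cosets of $\ker\phi$ in $\Gamma_{cusp}$; hence any subgroup strictly containing $\ker\phi$ contains some $A_1^j$ and so meets an elliptic. (This also shows your Schreier generator $A_1^6$ is trivial, simplifying that list.) Beyond this, note that all the quantitative inputs of your argument --- the rotation parts of $A_1$ and $R_2A_1$, the rewriting of the Schreier generators in terms of the three listed elements (or the covolume comparison), and the primitivity of $(R_2A_1)^2$ in the vertical lattice --- are deferred to computations requiring the explicit Heisenberg coordinates of $R_2$ and $A_1$, which you acknowledge but do not perform; that is exactly where the content of the proposition lies, so as written the proposal is a sound outline rather than a proof.
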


A straightforward computation with the list of representations gives the following description of cusp groups.
The representations for which the generator of the centraliser is elliptic, may be factorised (up to a finite index subgroup) through a representation of the fundamental group of the Satake-Baily-Borel compactification. Indeed, in the compactification, the centraliser of the cusp holonomy disappears.

\begin{prop}
The generator of the centraliser, $ (R_2 A_1)^2$, is elliptic of order at most three for all representations except for
three (up to conjugation) where it can be chosen to be unipotent of the form
\begin{align}
\rho( (R_2 A_1)^2)&= 
\begin{bmatrix}
1 & \xi & 0 \\
0 & 1& 0\\
0 & 0 & 1
\end{bmatrix}.
\end{align} 
 
\end{prop}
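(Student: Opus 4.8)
The plan is to treat the statement as a finite verification over the representations already classified, once the word $(R_2A_1)^2$ has been reduced to a short normal form. Recall $R_2=JR_1J^{-1}=JR_1J^2$ and $A_1=JR_1^2J^2R_1^2J$, and that every representation $\rho$ respects the relations $\rho(J)^3=\rho(R_1)^3=Id$. Working in the group one computes
\[
R_2A_1=(JR_1J^2)(JR_1^2J^2R_1^2J)=JR_1^3J^2R_1^2J=R_1^2J ,
\]
using $J^2J=J^3=Id$, then $R_1R_1^2=R_1^3=Id$, then $JJ^2=Id$. Hence $(R_2A_1)^2=R_1^2JR_1^2J$ and the object to analyse is the single explicit matrix $M=M(\rho)=\rho(R_1)^2\rho(J)\rho(R_1)^2\rho(J)$, a polynomial in the entries of $\rho(J)$ and $\rho(R_1)$.

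Next I would substitute, for each representation in the classification — the solutions $\alpha_1,\dots,\alpha_{15}$ of Proposition \ref{prop:Parameters36}, the degenerate (reducible) configurations, and the solutions of the regular-elliptic proposition — the explicit matrices and form $M$. To decide the conjugacy type of $M$ in $\pgl(3,\C)$ I would compute its characteristic polynomial and test two mutually exclusive conditions: whether $M^3\in\C^*\!\cdot Id$ (equivalently $M$ is diagonalisable with eigenvalues whose cubes coincide), in which case $M$ is elliptic of order dividing three; or whether $M$ has a single eigenvalue projectively together with a non-trivial Jordan block, in which case $M$ is unipotent. Ruling out the loxodromic possibility is automatic from the same eigenvalue computation. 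This is exactly the calculation performed in the companion files.

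The outcome I expect to verify is that $M^3$ is scalar for every representation except those in the signature $(2,1)$ Galois orbit $\{\alpha_{13},\alpha_{14},\alpha_{15}\}$; for these three, $M-Id$ has rank one and square zero, so $M$ is unipotent with a single $2\times 2$ Jordan block. Any such element of $\pgl(3,\C)$ is conjugate to $Id$ plus a single off-diagonal entry, so $M$ can be brought to the displayed form for some $\xi\neq 0$. Conceptually these three exceptional representations are the faithful embedding of the Deligne--Mostow lattice into $\PU(2,1)$ and its two Galois conjugates: for the embedding, $(R_2A_1)^2$ is a genuine parabolic element generating the centre of the cusp holonomy, which is why its image is unipotent rather than elliptic, and the conjugates inherit this by Galois-equivariance. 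This also matches the \emph{Factors} column of Table \ref{Tb:NonCompactDeligneMostow}, where the $(2,1)$ orbit is precisely the one that does not factor through the compactification.

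The main difficulty is not conceptual but one of completeness and bookkeeping: one must confirm the ellipticity condition $M^3\in\C^*\!\cdot Id$ simultaneously across all of the listed representations, paying particular attention to the reducible and degenerate families where $M$ is block-triangular and a careless reading of eigenvalues can mask a Jordan block, and one must make sure no further unipotent case is hidden among the Galois conjugates. Because every representation is rigid and given by explicit matrices over $\Q(\zeta_9)$, this reduces to a finite symbolic computation in exact cyclotomic arithmetic, which is where the Gr\"obner-basis machinery of the companion files does the work.
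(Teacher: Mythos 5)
Your proposal is correct and takes essentially the same route as the paper, whose entire proof is the ``straightforward computation with the list of representations'' that you spell out: reduce $(R_2A_1)^2$ to $(R_1^2J)^2$ via the projective relations $J^3=R_1^3=Id$, substitute the classified matrices from Section \ref{sec:36}, and determine the conjugacy type of the resulting matrix by exact cyclotomic arithmetic. Your added observations---that the three exceptional representations are the signature-$(2,1)$ Galois orbit $\{\alpha_{13},\alpha_{14},\alpha_{15}\}$ containing the Deligne--Mostow embedding itself (for which $(R_2A_1)^2$ generates the purely parabolic centre of the cusp group, hence is $2$-step unipotent), and that unipotence is Galois-equivariant---are consistent with the Factors column of Table \ref{Tb:NonCompactDeligneMostow} and correctly explain the outcome of the verification.
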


\addcontentsline{toc}{section}{\refname}
\bibliographystyle{alpha}
\bibliography{biblio}

\begin{flushleft}
  \textsc{E. Falbel\\
  Institut de Math\'ematiques \\
  de Jussieu-Paris Rive Gauche \\
CNRS UMR 7586 and INRIA EPI-OURAGAN \\
 Sorbonne Universit\'e, Facult\'e des Sciences \\
4, place Jussieu 75252 Paris Cedex 05, France \\}
 \verb|elisha.falbel@imj-prg.fr|
 \end{flushleft}
 
  \begin{flushleft}
  \textsc{I. Pasquinelli\\
  University of Bristol  \\
School of Mathematics\\
Fry Building 
Woodland Road 
Bristol BS8 1UG 
UK
 \\}
 \verb|irene.pasquinelli@bristol.ac.uk|
 \end{flushleft}
 
 \begin{flushleft}
  \textsc{A. Ucan-Puc\\
  Institut de Math\'ematiques \\
  de Jussieu-Paris Rive Gauche \\
CNRS UMR 7586 \\
 Sorbonne Universit\'e, Facult\'e des Sciences \\
4, place Jussieu 75252 Paris Cedex 05, France \\}
 \verb|alejandro.ucan-puc@imj-prg.fr|
 \end{flushleft}

\end{document}